\newtheorem{thm}{Theorem}[section]
\newtheorem{cor}[thm]{Corollary}
\newtheorem{lem}[thm]{Lemma}
\newtheorem{prop}[thm]{Proposition}
\theoremstyle{remark}
\newtheorem{rem}{Remark}[section]
\newtheorem*{exs}{Examples}
\newcommand{\veps}{\varepsilon}
\newcommand{\wh}{\widehat}
\newcommand{\whn}{\wh{\theta}_{n}}
\newcommand{\hsp}{\hspace{0.5cm}}
\newcommand{\T}{^{\, T}}
\newcommand{\diag}{\textnormal{diag}}
\newcommand{\cst}{C_{st}}
\renewcommand{\vec}{\textnormal{vec}}
\newcommand{\eqdef}{\, \overset{\Delta}{=}\, }
\newcommand{\tr}{\textnormal{tr}}
\renewcommand{\sp}{\textnormal{sp}}
\newcommand{\ind}{\mathbbm{1}}
\newcommand{\re}{\textnormal{Re}}
\newcommand{\dC}{\mathbb{C}}
\newcommand{\dE}{\mathbb{E}}
\newcommand{\dP}{\mathbb{P}}
\newcommand{\dR}{\mathbb{R}}
\newcommand{\dZ}{\mathbb{Z}}
\newcommand{\cF}{\mathcal{F}}
\newcommand{\cH}{\mathcal{H}}
\newcommand{\cL}{\mathcal{L}}
\newcommand{\cN}{\mathcal{N}}
\newcommand{\dd}{\, \mathrm{d}}
\newcommand{\cvgas}{~ \overset{a.s.}{\longrightarrow} ~}
\newcommand{\cvgp}{~ \overset{p}{\longrightarrow} ~}
\newcommand{\cvgd}{~ \overset{d}{\longrightarrow} ~}
\newcommand{\limn}{\lim_{n\, \rightarrow\, +\infty}}
\def\leq{\leqslant}
\def\geq{\geqslant}
\numberwithin{equation}{section}
\begin{document}

\title[Consistency and asymptotic normality in a class of nearly unstable processes]{Consistency and asymptotic normality in a class of nearly unstable processes}

\author{Marie Badreau}
\address{Laboratoire Manceau de Mathématiques, Le Mans Université, Avenue Olivier Messiaen, 72085 LE MANS Cedex 09, France.}
\email{marie.badreau.etu@univ-lemans.fr}

\author{Fr\'ed\'eric Pro\"ia}
\address{Univ Angers, CNRS, LAREMA, SFR MATHSTIC, F-49000 Angers, France.}
\email{frederic.proia@univ-angers.fr}

\keywords{Nearly unstable autoregressive process, OLS estimation, Asymptotic behavior, Unit root, Martingales.}

\begin{abstract}
This paper deals with inference in a class of stable but nearly-unstable processes. Autoregressive processes are considered, in which the bridge between stability and instability is expressed by a time-varying companion matrix $A_{n}$ with spectral radius $\rho(A_{n}) < 1$ satisfying $\rho(A_{n}) \rightarrow 1$. This framework is particularly suitable to understand unit root issues by focusing on the inner boundary of the unit circle. Consistency is established for the empirical covariance and the OLS estimation together with asymptotic normality under appropriate hypotheses when $A$, the limit of $A_n$, has a real spectrum, and a particular case is deduced when $A$ also contains complex eigenvalues. The asymptotic process is integrated with either one unit root (located at 1 or $-1$), or even two unit roots located at 1 and $-1$. Finally, a set of simulations illustrate the asymptotic behavior of the OLS. The results are essentially proved by $L^2$ computations and the limit theory of triangular arrays of martingales.
\end{abstract}

\maketitle

\section{Introduction and motivation}
\label{Sec_Intro}

This paper is dedicated to the boundary between stationarity and integration in time series, which has long since proven to be crucial for practitioners, \textit{e.g.} in econometrics or finance. First of all, we refer the reader to \citet{BrockwellDavis91} for a large overview of linear time series. In the context of autoregressive processes (AR), it is well-known that the least squares (OLS) estimator is strongly consistent wherever its characteristic roots lie, see \citet{LaiWei83}, but with very different convergence rates and limit distributions whether the process is stationary or integrated, or even explosive. According to the terminology of \citet{Duflo97}, a stationary process will be described as \textit{stable} and an integrated process as \textit{unstable} in what follows. In the first case, the OLS estimator is $\sqrt{n}$-consistent with a Gaussian asymptotic behavior whereas in the other case, it is (at least) $n$-consistent with an asymmetrical asymptotic behavior that can be written as functionals of a standard Wiener process $(W(t),\, t \in [0, 1])$. Precisely, for example in the univariate case, in the stable (left-hand side) and unstable (right-hand side, $\theta = \pm 1$) cases, either
\begin{equation*}
\sqrt{n} \, \big( \whn - \theta \big) \cvgd \cN(0,1-\theta^2) \hsp \text{or} \hsp n\, \big( \whn-\theta \big) \cvgd  \frac{\pm \int_0^1 W(t) \dd W(t)}{\int_0^1 W^2(t) \dd t}
\end{equation*}
and these results are extended to the general case (see \textit{e.g.} the substantial work of \citet{ChanWei88}). This discontinuity has motivated numerous studies devoted to intermediate AR models aimed at bridging these two situations. Most of these studies involve the AR(1) process with time-varying coefficients, which is an easy-to-use relevant case to understand the underlying dynamics. Random coefficients have been suggested to bypass the difficulty: as it is explained in \citet{HorvathTrapani16}, thanks to a specific self-normalized WLS estimator of $\theta$, ``there is no \textit{unit root problem} in case of the RCA model". Indeed, the authors get the asymptotic normality irrespective of the average value of the coefficient, only the rate is affected. In a different context, the so-called \textit{volatility induced stationarity} detailed \textit{e.g.} in \citet{Ling04} and \citet{NielsenRahbek14}, enable to accommodate integration and stationarity in a double autoregression through stochastic volatility despite a unit root in the conditional mean of the process, see also \citet{Trapani21}. We refer the reader to all the references contained in those papers. An AR(1) with non-random but time-varying coefficients may also be written in the triangular form\footnote{Double indexing is customary to such representations: $X_{n,\, k}$ is the $k$-th observation of a time series of size $n$ (apart from the initial value). The triangular form of the process is $\{ X_{1,\, 0}, X_{1,\, 1} \}, \{ X_{2,\, 0}, X_{2,\, 1}, X_{2,\, 2} \}, \ldots, \{ X_{n,\, 0}, \ldots, X_{n,\, n} \}$.}
\begin{equation*}
\forall\, n \geq 1,\, \forall\, 1 \leq k \leq n, \hsp X_{n,\, k} = \theta_n\, X_{n,\, k-1} + \veps_k
\end{equation*}
where $(\veps_k)$ is a sequence of zero-mean perturbations (usually independent and identically distributed (i.i.d.) random variables, or differences of martingale) and $X_{n,\, 0}$ is an arbitrary initial value. In that case, the OLS estimator is obviously given by
\begin{equation}
\label{Intro_OLS}
\whn = \frac{\sum_{k=1}^n X_{n,\, k-1}\, X_{n,\, k}}{\sum_{k=1}^n X_{n,\, k-1}^{\, 2}}.
\end{equation}
To focus on the inner neighborhood of the unit root, the idea is to consider a sequence of coefficients that satisfies $\vert \theta_n \vert < 1$ for all $n \geq 1$ but $\vert \theta_n \vert \rightarrow 1$, so that a new model corresponds to each new observation, always stable but increasingly close to instability. \citet{ChanWei87} show that, letting $\theta_n = 1 - c/n$ and under appropriate assumptions, the OLS estimator \eqref{Intro_OLS} is $n$-consistent whether $c=0$ (which corresponds to the standard integrated AR(1) process) or $c>0$. Once self-normalized, it is shown in Thm. 1 that the estimation error has a non-Gaussian limit distribution $\cL(c)$ which is asymptotically $\cN(0, 1)$ as $c \rightarrow +\infty$ (see also Thm. 2 of \citet{Phillips87}). In this context, it is natural to conjecture that any rate faster than $n$ shall lead to the same conclusion. But slowing down the convergence of $\theta_n$ to 1 in order to make the aforementioned bridge, \citet{GiraitisPhillips06} and later \citet{PhillipsMagdalinos07} suggest to fix $\theta_n = 1 - c/v_n$ for $1 \ll v_n \ll n$. Among other results, they establish that
\begin{equation*}
\sqrt{n\, v_n}\, \big( \whn - \theta_n \big) \cvgd \cN(0,\, 2c)
\end{equation*}
as soon as $c > 0$, that is in the stable but nearly unstable case, and that the result also holds, under suitable assumptions, around the negative unit root with $\theta_n = -1 + c/v_n$. In the special case where $v_n = n^\alpha$ ($0 < \alpha < 1$), the rate is $n^{(1+\alpha)/2}$ and the authors note that the interval $\sqrt{n} \ll n^{(1+\alpha)/2} \ll n$ is covered but they emphasize that boundaries do not match. The asymptotic variance is overestimated for $\alpha \rightarrow 0^+$ ($2c$ instead of $2c - c^2$ when $\alpha=0$) whereas the Gaussian limit distribution for $\alpha \rightarrow 1^-$ is no longer Gaussian for $\alpha=1$. Note that the authors also consider the explosive case $\vert \theta_n \vert > 1$, whereas we will only focus here on the stable alternative to the unstable process. In the same vein, \citet{PhillipsLee15} develop a limit theory for nonstationary vector autoregressions with mixed roots in the vicinity of unity involving persistent and explosive components.

\smallskip

Now for any $p \geq 1$, consider the process generated according to the triangular form
\begin{equation}
\label{Intro_NearlyUnstableAR}
\forall\, n \geq 1,\, \forall\, 1 \leq k \leq n, \hsp X_{n,\, k} = \sum_{i=1}^{p} \theta_{n,\, i}\, X_{n,\, k-i} + \veps_k
\end{equation}
where $(\veps_k)$ is a sequence of zero-mean i.i.d. random variables with variance $\sigma^2 > 0$. In an equivalent way, the vector expression of this process is
\begin{equation}
\label{Intro_ModVAR}
\Phi_{n,\, k} = A_{n}\, \Phi_{n,\, k-1} + E_k
\end{equation}
where $E_k = (\veps_k, 0, \ldots, 0)\T$ is a $p$-vectorial noise, $\Phi_{n,\, k} = (X_{n,\, k}, \ldots, X_{n,\, k-p+1})\T$ and
\begin{equation*}
A_{n} = \begin{pmatrix}
\theta_{n,\, 1} & \theta_{n,\, 2} & \hdots & \theta_{n,\, p} \\
& I_{p-1} & & 0
\end{pmatrix}
\end{equation*}
is the associated $p \times p$ companion matrix. The initial value $\Phi_{n,\, 0}$ is supposed to have a finite moment of order 2 and to be independent of $(E_k)$. The OLS estimator of $\theta_n = (\theta_{n,\, 1}, \ldots, \theta_{n,\, p})\T$ is then given by
\begin{equation}
\label{Intro_OLSMulti}
\whn = S_{n,\, n-1}^{\, -1} \sum_{k=1}^{n} \Phi_{n,\, k-1}\, X_{n,\, k} \hsp \text{where} \hsp S_{n,\, n-1} = \sum_{k=0}^{n-1} \Phi_{n,\, k}\, \Phi_{n,\, k}\T.
\end{equation}
Note that we may add a small $\epsilon > 0$ to $S_{n,\, n-1}$ to avoid a useless invertibility assumption, without disturbing the asymptotic behavior.\footnote{\label{FN} To be rigorous, one should write $\wh{\theta}_{n,\, n}$ instead of $\whn$ to emphasize that the OLS is a function of $X_{n,\, 0}, \ldots, X_{n,\, n}$. Similarly, $S_{n}$ will be used for $S_{n,\, n}$  (and $T_{n}$ for $T_{n,\, n}$, etc.) to lighten the notation when no confusion can arise.} The $p$-dimensional process \eqref{Intro_ModVAR} is stable when $\rho(A_n) < 1$, that is, when the largest modulus of its eigenvalues is less than 1, see Def. 2.3.17 of \citet{Duflo97}. Since the eigenvalues of $A_n$ are the inverses of the roots of the complex polynomial $\Theta_n(z) = 1 - \theta_{n,\, 1}\, z - \ldots - \theta_{n,\, p}\, z^{\, p}$, this is equivalent to say that $\Theta_n(z) \neq 0$ for all $\vert z \vert \leq 1$. On the contrary, it is unstable when $\rho(A_n) = 1$. Along the same lines, we will thus consider that $\rho(A_n) < 1$ for all $n \geq 1$ but $\rho(A_n) \rightarrow 1$, which corresponds to a stable but nearly unstable AR$(p)$ process with time-varying coefficients. In this context and under suitable assumptions, \citet{Proia20} has established some moderate deviation principles for the empirical covariance and the OLS estimator that hold for any $p \geq 1$ (extending those of \citet{MiaoWangYang15} valid for $p=1$). In particular, the sequence
\begin{equation*}
\left( \frac{\sqrt{n}}{b_{n}\, (1 - \rho(A_{n}))^{1/2}}\, \big( \whn - \theta_n \big) \right)_{\!n\, \geq\, 1}
\end{equation*}
satisfies a large deviation principle with a speed $(b_n^{\, 2})$ and a rate function depending on the renormalized limit covariance of the process. Let us also mention the weak unit roots of \citet{Park03} containing more applications than ours, including faster rates of convergence for the nearly unit root and non-linear models, but in a more restricted setting (we will come back to this in due time), or the recent work of \citet{BuchmannChan13} who introduce a perturbation in the Jordan canonical form of the AR$(p)$ model (see Thm. 2.1) and get a set of convergences in a context close to ours (although more general). However, by directly dealing with the spectral radius of the companion matrix, the approach of this paper seems easier to interpret and we will see that \textit{in fine} different kind of results are obtained \textit{via} different technical tools. Now, to complete these deviations and to generalize the results of \citet{PhillipsMagdalinos07}, we aim at proving the consistency and asymptotic normality of the OLS estimator \eqref{Intro_OLSMulti}. The strategy remains the same, but the calculation steps will prove to be much trickier. In the second section, the assumptions and main results are provided together with comments and examples. The third section is dedicated to the technical proofs whereas the fourth section is the empirical part of the paper, containing simulations. A quick conclusion with considerations about further improvements ends the paper.

\section{Main results}
\label{Sec_Results}

First, let us start by describing three technical hypotheses that will be needed to achieve our goals. In particular, the first one is a matter of simplification of the reasonings since $A_n$ turns out to
be diagonalizable for a sufficiently large $n$, and that specificity will prove to be very useful. The second hypothesis is related to the number of unit roots in the asymptotic process (either $\pm 1$, or both 1 and $-1$). The third assumption characterizes our stable but nearly-unstable setting. The complex eigenvalues are sorted according to their modulus (in descending order), with ties broken by lexicographic order (also in descending order).

\begin{enumerate}[label=(H$_\arabic*$)]
\item \label{HypCA} \textit{Convergence of the companion matrix}. There exists a $p \times p$ matrix $A$ such that
\begin{equation*}
\limn A_{n} = A
\end{equation*}
with distinct eigenvalues $0\, <\, \vert \lambda_{p} \vert\, \leq\, \ldots\, \leq\, \vert \lambda_2 \vert\, \leq\, \vert \lambda_{1} \vert\, =\, \rho(A) = 1$, and the top-right element of $A$ is non-zero ($\theta_p \neq 0$). 
\item \label{HypUR} \textit{Number of unit roots}. Either \ref{HypUR1} or \ref{HypUR2} is true.
\begin{enumerate}[label=\:(H$_{2\arabic*}$)]
\item \label{HypUR1} There is exactly one unit root in $A$ ($\lambda_1 = \pm 1$ but $\vert \lambda_2 \vert < 1$ if $p \geq 2$).
\item \label{HypUR2} There is exactly two unit roots in $A$ ($\lambda_1 = 1$, $\lambda_2 = -1$ but $\vert \lambda_3 \vert < 1$ if $p \geq 3$).
\end{enumerate}
\item \label{HypSR} \textit{Spectral radius of the companion matrix}. The spectral radius of $A_n$ is given by
\begin{equation*}
\rho(A_{n}) = 1 - \frac{c}{v_n}
\end{equation*}
for some $c > 0$ and $1 \ll v_n \ll n$.
\end{enumerate}

Most of our results will be stated under \ref{HypUR1} but we will explain at the end of the section that, in fact, they are still valid under \ref{HypUR2} with only slight adjustments of the rates and limit behaviors. For readability purposes, the calculations will not be developed in that case but left to the reader since they follow exactly the same lines. A decisive argument in the technical part of the paper rests on the diagonalization of $A_n$, on which we will give more details in due course. But to summarize, we will explain that there exists $n_0 \geq 1$ such that, for all $n > n_0$, $A_n = P_n\, D_n\, P_n^{-1}$ where $D_n = \diag(\lambda_{n,\, 1},\, \ldots,\, \lambda_{n,\, p})$ contains the ordered distinct eigenvalues $1\, >\, \vert \lambda_{n,\, 1} \vert\, \geq\, \ldots\, \geq\, \vert \lambda_{n,\, p} \vert\, >\, 0$ of $A_n$ which converge to those of $A$ (see \ref{HypCA} above), and the basis of eigenvectors of $A_n$ can be written in the standardized form
\begin{equation}
\label{MatP}
P_n = \begin{pmatrix}
1 & 1 & \hdots & 1 \\
\frac{1}{\lambda_{n,\, 1}} & \frac{1}{\lambda_{n,\, 2}} & \hdots & \frac{1}{\lambda_{n,\, p}} \\
\vdots & \vdots & & \vdots \\
\frac{1}{\lambda_{n,\, 1}^{p-1}} & \frac{1}{\lambda_{n,\, 2}^{p-1}} & \hdots & \frac{1}{\lambda_{n,\, p}^{p-1}}
\end{pmatrix}.
\end{equation}
In addition, $P_n \rightarrow P$ and $P_n^{-1} \rightarrow P^{-1}$ whose entries of the first column are denoted by $\pi_{11}, \ldots, \pi_{p1}$. The $p \times p$ symmetric matrix $H$ and its standardized version $H_0$, that will play the role of the precision matrix in the asymptotic normality of the OLS estimate, are then defined as
\begin{equation}
\label{MatH}
H = \sigma^2\, \begin{pmatrix}
\frac{\pi_{11}^2}{2} & 0 & \hdots & 0 \\
0 & \frac{\pi_{21}^2}{1 - \lambda_2^2} & \hdots & \frac{\pi_{21} \pi_{p1}}{1 - \lambda_2 \lambda_p} \\
\vdots & \vdots & \ddots & \vdots \\
0 & \frac{\pi_{p1} \pi_{21}}{1 - \lambda_p \lambda_2} & \hdots & \frac{\pi_{p1}^2}{1 - \lambda_p^2}
\end{pmatrix} \eqdef \sigma^2\, H_0.
\end{equation}
From now on, to lighten the expressions we will rather use $\rho_n \eqdef \rho(A_{n})$. Also, for any $d \geq 1$, the usual $d \times d$ matrices
\begin{equation}
\label{MatK}
I_d = \begin{pmatrix} 1 & 0 & \hdots \\
 0 & \ddots & \\
 \vdots & & 1
\end{pmatrix} \hsp \text{and} \hsp K_d = \begin{pmatrix} 1 & 0 & \hdots \\
0 & 0 & \\
\vdots & & \ddots
\end{pmatrix}
\end{equation}
will be frequently encountered, both in statements and proofs. Similarly, the first vector of the canonical basis of $\dR^{d}$ will be denoted by $e_d = (1,0,\ldots,0)\T$. Finally, $f_n \asymp g_n$ will have the meaning that both $f_n = O(g_n)$ and $g_n = O(f_n)$.

\subsection{Causal representation and memory}

Under \ref{HypSR}, there exists a causal representation of $(\Phi_{n,\, k})$ given by
\begin{equation*}
\forall\, n \geq 1,\, \forall\, 1 \leq k \leq n, \hsp \Phi_{n,\, k} = \sum_{\ell\, \geq\, 0} A_n^\ell\, E_{k-\ell}
\end{equation*}
that directly leads to the autocovariance function
\begin{equation}
\label{ACV}
\forall\, h \geq 0, \hsp \Gamma_n(h) = \sigma^2 A_n^{h} \sum_{\ell\, \geq\, 0} A_n^\ell\, K_p\, (A_n\T)^\ell \hsp \text{and} \hsp \Gamma_n(-h) = \Gamma_n\T(h)
\end{equation}
provided that the noise has a finite moment of order 2. In particular, we have the following result related to the memory of the process.
\begin{prop}
\label{Prop_Mem}
Assume that \ref{HypCA}, \ref{HypUR} and \ref{HypSR} hold, and that $\dE[\veps_1^{\, 2}] = \sigma^2 < +\infty$. Then,
\begin{equation*}
\forall\, n > n_0, \hsp \left\Vert \sum_{h\, \in\, \dZ} \Gamma_n(h) \right\Vert < +\infty.
\end{equation*}
Besides, as $n \rightarrow +\infty$,
\begin{equation*}
\sum_{h\, \in\, \dZ} \Gamma_n(h) \asymp \left\{
\begin{array}{ll}
(1 - \rho_n)^{-2} & \textnormal{under \ref{HypUR1} with $\lambda_1=1$ or \ref{HypUR2}}, \\
1 & \textnormal{under \ref{HypUR1} with $\lambda_1=-1$.}
\end{array}
\right.
\end{equation*}
\end{prop}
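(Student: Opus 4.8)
The plan is to collapse the bilateral sum into a closed-form, rank-one matrix whose size is governed by the single scalar $\Theta_n(1) = 1 - \sum_{i=1}^p \theta_{n,\,i}$, and then to read off the two regimes from the factorisation of $\Theta_n(1)$ over the eigenvalues. First I would record that, writing $\Sigma_n \eqdef \sum_{\ell\, \geq\, 0} A_n^\ell\, K_p\, (A_n\T)^\ell$ so that $\Gamma_n(h) = \sigma^2 A_n^h\, \Sigma_n$ for $h \geq 0$ by \eqref{ACV}, the bound $\rho_n < 1$ valid for $n > n_0$ makes the Neumann series $\sum_{h\, \geq\, 0} A_n^h = (I_p - A_n)^{-1}$ and $\Sigma_n$ converge, with $\sum_{h\, \geq\, 0} \Vert \Gamma_n(h) \Vert < +\infty$; this already yields the finiteness assertion. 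Splitting $\sum_{h\, \in\, \dZ} \Gamma_n(h) = \Gamma_n(0) + \sum_{h\, \geq\, 1} \big( \Gamma_n(h) + \Gamma_n\T(h) \big)$, summing the geometric series and invoking the Lyapunov identity $\Sigma_n = A_n\, \Sigma_n\, A_n\T + K_p$ (immediate from the definition of $\Sigma_n$), I expect the algebra to telescope into the compact long-run variance
\begin{equation*}
\sum_{h\, \in\, \dZ} \Gamma_n(h) = \sigma^2\, (I_p - A_n)^{-1}\, K_p\, (I_p - A_n\T)^{-1}.
\end{equation*}

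The second step exploits the companion structure. Since $K_p = e_p\, e_p\T$, the right-hand side factors as $\sigma^2\, u_n\, u_n\T$ with $u_n \eqdef (I_p - A_n)^{-1}\, e_p$, and I would solve $(I_p - A_n)\, u_n = e_p$ directly rather than by inverting: the $p-1$ subdiagonal rows each read $-u_n^{(i-1)} + u_n^{(i)} = 0$ and thus force all coordinates of $u_n$ to coincide, whereupon the first row gives $u_n = \Theta_n(1)^{-1}\, \mathbf{1}_p$ with $\mathbf{1}_p = (1, \ldots, 1)\T$. Consequently $\sum_{h\, \in\, \dZ} \Gamma_n(h) = \sigma^2\, \Theta_n(1)^{-2}\, \mathbf{1}_p\, \mathbf{1}_p\T$ is rank one with a fixed, non-zero profile matrix, so that $\big\Vert \sum_{h\, \in\, \dZ} \Gamma_n(h) \big\Vert \asymp \Theta_n(1)^{-2}$ and the whole problem reduces to the rate of $\Theta_n(1)$.

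It then remains to track $\Theta_n(1)$. As the $\lambda_{n,\, j}$ are the inverses of the roots of $\Theta_n$, we have $\Theta_n(1) = \det(I_p - A_n) = \prod_{j=1}^p (1 - \lambda_{n,\, j})$, and I would examine each factor. Under \ref{HypUR1} with $\lambda_1 = 1$, the leading eigenvalue is real with $\lambda_{n,\, 1} = \rho_n \to 1$, producing the single vanishing factor $1 - \rho_n$ while the remaining factors converge to the non-zero limits $1 - \lambda_j$; hence $\Theta_n(1) \asymp 1 - \rho_n$ and the sum is $\asymp (1 - \rho_n)^{-2}$. Under \ref{HypUR2} the picture is identical up to one extra bounded factor $1 - \lambda_{n,\, 2} \to 2$, which leaves the rate unchanged. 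Under \ref{HypUR1} with $\lambda_1 = -1$, every factor converges to a non-zero limit (in particular $1 - \lambda_{n,\, 1} \to 2$), so $\Theta_n(1) \to \prod_j (1 - \lambda_j) \neq 0$ and the sum stays $\asymp 1$, which is the announced dichotomy.

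The two points I expect to require care are the Lyapunov simplification producing the closed form of the long-run variance, and—in the two-unit-root case \ref{HypUR2}—verifying that it is the eigenvalue converging to $+1$, and not the one converging to $-1$, that realises the spectral radius, so that $1 - \lambda_{n,\, 1} = 1 - \rho_n$ exactly rather than merely $1 - \lambda_{n,\, 1} \geq 1 - \rho_n$. The ordering convention of \ref{HypCA}, under which $\lambda_{n,\, 1}$ carries the largest modulus and converges to $\lambda_1 = 1 > 0$, should settle this for $n$ large enough and is the only place where the sorting of eigenvalues genuinely enters.
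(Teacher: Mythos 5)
Your proof is correct, and it takes a genuinely different route from the paper's. The paper proves the finiteness by bounding $\Vert \sum_{h} \Gamma_n(h) \Vert \leq \cst\, (1-\rho_n)^{-2}$ through the diagonalization of Lemma \ref{Lem_Diag}, and then obtains the rates by renormalizing and passing to the limit: it invokes $(1-\rho_n)\, \Gamma_n(0) \rightarrow \Gamma$ (a forward reference to \eqref{CvgGam}, itself resting on the Kronecker limit $(1-\rho_n)\, B_n^{-1} \rightarrow \tfrac{1}{2}(A^* \otimes A^*)$ of Lemma \ref{Lem_RatInvB}) to deduce $(1-\rho_n)^2 \sum_h \Gamma_n(h) \rightarrow \Gamma^+$ under \ref{HypUR1} with $\lambda_1 = 1$ or \ref{HypUR2}, non-degenerate since its top-left entry is $\sigma^2 \pi_{11}^2 \neq 0$, and $\sum_h \Gamma_n(h) \rightarrow \Gamma^- \neq 0$ when $\lambda_1 = -1$. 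You instead compute the long-run covariance in closed form: the absolutely convergent double-sum rearrangement (equivalently your Lyapunov telescoping, which does check out) gives $\sum_{h \in \dZ} \Gamma_n(h) = \sigma^2 (I_p - A_n)^{-1} K_p\, (I_p - A_n\T)^{-1}$, and since $K_p = e_p\, e_p\T$ while the companion structure forces $(I_p - A_n)^{-1} e_p = \Theta_n(1)^{-1}\, \mathbf{1}_p$ (legitimate because $\rho_n < 1$ keeps $1$ out of the spectrum), this equals $\sigma^2\, \Theta_n(1)^{-2}\, \mathbf{1}_p \mathbf{1}_p\T$ exactly, reducing the whole problem to $\Theta_n(1) = \prod_j (1 - \lambda_{n,\, j})$. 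Your route buys an exact identity rather than an $\asymp$ statement, is self-contained (no Kronecker machinery, no forward references to Section \ref{Sec_ProofCov}), visibly generalizes the univariate formula $\sigma^2 (1-\theta_n)^{-2}$ quoted right after the proposition, and is consistent with the paper's limit: by \eqref{pik1}, $\pi_{11}^2 = \prod_{j \geq 2} (1-\lambda_j)^{-2}$, so your limit under $\lambda_1 = 1$ is precisely $\Gamma^+ = \sigma^2 \pi_{11}^2\, \mathbf{1}_p \mathbf{1}_p\T$. What the paper's route buys is economy at the scale of the article, since the $B_n^{-1}$ machinery is needed anyway for Proposition \ref{Prop_Cov} and the theorems. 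Both arguments ultimately hinge on the same spectral fact, namely that for $n$ large the dominant eigenvalue is real, hence $\lambda_{n,\, 1} = \pm \rho_n$ (under \ref{HypCA} only one eigenvalue of $A_n$ can approach each distinct limit, so $\lambda_{n,\, 1}$ must coincide with its conjugate), and you correctly isolated the only delicate point under \ref{HypUR2}: with the paper's ordering convention, $\lambda_{n,\, 1}$ both carries the spectral radius and converges to $+1$, so $1 - \lambda_{n,\, 1} = 1 - \rho_n$ exactly, which is also what the paper implicitly assumes in the discussion following its Examples.
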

\begin{proof}
See Section \ref{Sec_ProofCov}.
\end{proof}

In other terms, using the terminology of Sec. 1.3.1 of \citet{BeranEtAl13}, $(\Phi_{n,\, k})$ has a \textit{short memory} at fixed $n$ whereas, as $n$ tends to infinity, it turns to a \textit{long memory} process when $\lambda_1=1$ but keeps a short memory when $\lambda_1=-1$. This is especially clear in the univariate case for which
\begin{equation*}
\sum_{h\, \in\, \dZ} \gamma_n(h) = \frac{\sigma^2}{(1 - \theta_n)^2}
\end{equation*}
is an increasing function of $\theta_n$, minimal for $\theta_n \rightarrow -1^+$ and diverging to infinity for $\theta_n \rightarrow 1^-$. We are now ready to state the following result dedicated to the asymptotic behavior of the covariance matrix of the process.
\begin{prop}
\label{Prop_Cov}
Assume that \ref{HypCA}, \ref{HypUR1} and \ref{HypSR} hold, and that $\dE[\veps_1^{\, 2}] = \sigma^2 < +\infty$. Then, the empirical covariance matrix $S_{n,\, n} = S_n$ given in \eqref{Intro_OLSMulti} satisfies
\begin{equation*}
(1 - \rho_n)\, \frac{S_n}{n} \cvgp \frac{\sigma^2}{2}\, \vec^{-1}\big( (A^* \otimes A^*)\, e_{p^2} \big) \eqdef \Gamma \hsp \text{with} \hsp A^* = P\, K_p\, P^{-1}
\end{equation*}
where $\vec^{-1} : \dR^{p^2} \rightarrow \dR^{p \times p}$ is the vectorization inverse operator and the invertible matrix $P$ is the limit of $P_n$ in \eqref{MatP}. In addition,
\begin{equation*}
(1 - \rho_n) \left\Vert \frac{S_n}{n} - \Gamma_n \right\Vert \cvgp 0 \hsp \text{where} \hsp \Gamma_n = \Gamma_n(0) = \sigma^2 \sum_{\ell\, \geq\, 0} A_n^\ell\, K_p\, (A_n\T)^\ell
\end{equation*}
is the covariance of the stationary process (at fixed $n$).
\end{prop}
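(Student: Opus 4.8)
The plan is to exploit the Lyapunov structure of the second moments and to vectorize everything so that the near-unit-root blow-up is carried by a single, explicitly invertible operator. Writing $S_n=\sum_{k=0}^n\Phi_{n,k}\Phi_{n,k}\T$ and substituting the recursion \eqref{Intro_ModVAR} into $\sum_{k=1}^n\Phi_{n,k}\Phi_{n,k}\T$, I would first establish the matrix identity
\[ S_n-A_nS_nA_n\T=\Phi_{n,0}\Phi_{n,0}\T-A_n\Phi_{n,n}\Phi_{n,n}\T A_n\T+M_n+M_n\T+N_n, \]
where $N_n=\sum_{k=1}^n E_kE_k\T=\big(\sum_{k=1}^n\veps_k^{\,2}\big)K_p$ and $M_n=A_n\sum_{k=1}^n\Phi_{n,k-1}E_k\T$ is a martingale transform. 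Applying $\vec$ turns the left-hand side into $(I_{p^2}-A_n\otimes A_n)\vec(S_n)$, so that with $\mathcal{G}_n\eqdef(1-\rho_n)(I_{p^2}-A_n\otimes A_n)^{-1}$,
\[ (1-\rho_n)\frac{\vec(S_n)}{n}=\mathcal{G}_n\Big[\tfrac1n\vec(\Phi_{n,0}\Phi_{n,0}\T)-\tfrac1n\vec(A_n\Phi_{n,n}\Phi_{n,n}\T A_n\T)+\tfrac1n\vec(M_n+M_n\T)+\tfrac1n\vec(N_n)\Big]. \]

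The deterministic prefactor is handled through the diagonalization $A_n=P_nD_nP_n^{-1}$: since $A_n\otimes A_n=(P_n\otimes P_n)(D_n\otimes D_n)(P_n\otimes P_n)^{-1}$ has eigenvalues $\lambda_{n,i}\lambda_{n,j}$, only the entry attached to $\lambda_{n,1}^{\,2}$ is singular, and because $\lambda_1=\pm1$ is a simple real limit the approximating eigenvalue $\lambda_{n,1}$ is real with $\lambda_{n,1}^{\,2}=\rho_n^{\,2}$ for $n$ large. Hence $(1-\rho_n)/(1-\lambda_{n,1}^{\,2})=1/(1+\rho_n)\to1/2$ while every other diagonal entry of $(1-\rho_n)(I_{p^2}-D_n\otimes D_n)^{-1}$ vanishes; together with $P_n\to P$, $K_p\otimes K_p=e_{p^2}e_{p^2}\T$ and $\vec(K_p)=e_{p^2}$ this gives $\mathcal{G}_n\to\tfrac12\,A^*\otimes A^*$, a bounded limit, where $A^*=PK_pP^{-1}$. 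The very same computation applied to the stationary Lyapunov identity $(I_{p^2}-A_n\otimes A_n)\vec(\Gamma_n)=\sigma^2e_{p^2}$ yields $(1-\rho_n)\vec(\Gamma_n)=\mathcal{G}_n\,\sigma^2e_{p^2}\to\tfrac{\sigma^2}{2}(A^*\otimes A^*)e_{p^2}=\vec(\Gamma)$, which is the announced constant.

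It then remains to control the four random terms in the bracket, and the crucial observation—given that only $\dE[\veps_1^{\,2}]=\sigma^2<+\infty$ is assumed—is that no fourth moment is ever required. The quadratic term is the cleanest: $\tfrac1n\vec(N_n)=\big(\tfrac1n\sum_{k=1}^n\veps_k^{\,2}\big)e_{p^2}\cvgp\sigma^2e_{p^2}$ by the weak law of large numbers, which needs only a finite second moment. For the martingale term I would use the bound $\dE\big[\|M_n\|_F^2\big]\leq\|A_n\|^2\sigma^2\sum_{k=1}^n\dE\big[\|\Phi_{n,k-1}\|^2\big]$ and then, via the eigen-expansion of $A_n^k$ (with $\|P_n\|$ and $\|P_n^{-1}\|$ uniformly bounded) together with $\tr\Gamma_n(0)\asymp(1-\rho_n)^{-1}$, establish $\sum_{k=1}^n\dE\big[\|\Phi_{n,k-1}\|^2\big]=O\big(n(1-\rho_n)^{-1}\big)$; since $(1-\rho_n)^{-1}=v_n/c\ll n$ this gives $\dE\big[\|M_n/n\|_F^2\big]=O\big((1-\rho_n)^{-1}/n\big)\to0$, hence $M_n/n\cvgp0$ in $L^2$. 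The two boundary terms vanish likewise: $\Phi_{n,0}$ has a uniformly bounded second moment so $\Phi_{n,0}\Phi_{n,0}\T/n\cvgp0$, while $\dE\big[\|\Phi_{n,n}\|^2\big]=O\big((1-\rho_n)^{-1}\big)=o(n)$ forces $A_n\Phi_{n,n}\Phi_{n,n}\T A_n\T/n\cvgp0$.

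Combining these facts by Slutsky—the bracket converges in probability while $\mathcal{G}_n$ converges deterministically to a bounded matrix—I would obtain $(1-\rho_n)\vec(S_n)/n\cvgp\tfrac{\sigma^2}{2}(A^*\otimes A^*)e_{p^2}=\vec(\Gamma)$, which is the first assertion after applying $\vec^{-1}$. For the second assertion I would simply subtract the two vectorized expressions, giving $(1-\rho_n)\big(\vec(S_n)/n-\vec(\Gamma_n)\big)=\mathcal{G}_n\big[\tfrac1n\vec(N_n)-\sigma^2e_{p^2}+\text{boundary and martingale terms}\big]$, every summand of which has just been shown to tend to $0$ in probability; hence $(1-\rho_n)\|S_n/n-\Gamma_n\|\cvgp0$. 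The main obstacle is precisely this third step: extracting convergence in probability from a pure $L^2$ hypothesis is only possible because the recursion isolates the diagonal noise contribution $N_n$—a genuine i.i.d.\ sum amenable to the weak law—from the martingale cross-term $M_n$, which is killed in $L^2$ thanks to the regime $1\ll v_n\ll n$, so that the fourth-order fluctuations of $\Phi_{n,k}\Phi_{n,k}\T$ never need to be estimated directly.
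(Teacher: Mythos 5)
Your proposal is correct and follows essentially the same route as the paper: the same vectorized Sylvester/Lyapunov decomposition of $S_n$ with boundary, martingale and noise terms, the same key limit $(1-\rho_n)\,B_n^{-1} \rightarrow \tfrac{1}{2}\,(A^* \otimes A^*)$ obtained by diagonalizing $A_n \otimes A_n$, and the same $L^1$/$L^2$/weak-law controls of the three random contributions, including the stationary identity $(I_{p^2}-A_n\otimes A_n)\,\vec(\Gamma_n)=\sigma^2 e_{p^2}$ for the second assertion. The only (immaterial) deviations are that you work directly with $S_{n,n}$ rather than $S_{n,n-1}$ and conclude the second claim by subtracting the two vectorized identities instead of passing through the triangle inequality with $\Gamma$.
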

\begin{proof}
See Section \ref{Sec_ProofCov}.
\end{proof}

\begin{rem}
For the empirical covariance of the process to be consistent, it must be renormalized by $1-\rho_n$. In the stable case (stationary and ergodic), it is well-known that $S_n$ converges at rate $n$ whereas in the unstable case (integrated with one unit root, either positive or negative), $S_n$ converges at rate $n^2$. Let $p=1$ to simplify. Then, under suitable hypotheses, in the stable (left-hand side) and unstable (right-hand side) cases, either
\begin{equation*}
\frac{S_n}{n} \cvgas \gamma_0 \hsp \text{or} \hsp \frac{S_n}{n^2} \cvgd \sigma^2\! \int_0^1 W^2(t) \dd t
\end{equation*}
where $\gamma_0 > 0$ is the variance of the stationary process and $(W(t),\, t \in [0, 1])$ is a standard Wiener process. Proposition \ref{Prop_Cov} establishes the convergence of $S_{n,\, n}$ at rate $n/(1 - \rho_n) \propto n^{1+\alpha}$ for $v_n = n^{\alpha}$ and $0 < \alpha < 1$. One can see that our model bridges the stable and unstable cases in terms of empirical covariance even if the limit behaviors as $\alpha \rightarrow 0^+$ and $\alpha \rightarrow 1^-$ do not make a connection with $\alpha=0$ and $\alpha=1$.
\end{rem}

Even if it is instructive to control the behavior of the covariance matrix in a stationary process, that result shall \textit{not} help to prove the following theorems, dedicated to the OLS estimator, since the limit matrix is not invertible as soon as $p > 1$. Indeed a direct calculation shows that $(A^* \otimes A^*)\, e_{p^2}$ is a column vector of size $p^2$ with entries $\pm \pi^{\,2}_{11}$ and different rates of convergence must be used to achieve our objectives.

\subsection{OLS estimation}

 In view of the above, consider the $p \times p$ diagonal matrix defined as
\begin{equation}
\label{MatV}
V_n = \diag( 1 - \rho_n,\, 1,\, \ldots,\, 1)
\end{equation}
together with the $p^2 \times p^2$ matrix given, for $n > n_0$, by
\begin{equation}
\label{MatW}
W_n = ( V_n^{\, 1/2} P_n^{-1} ) \otimes ( V_n^{\, 1/2} P_n^{-1} )
\end{equation}
where $P_n$ is the (invertible) basis of eigenvectors of $A_n$.

\begin{thm}
\label{Thm_Consist}
Assume that \ref{HypCA}, \ref{HypUR1} and \ref{HypSR} hold, and that $\dE[\veps_1^{\, 2}] = \sigma^2 < +\infty$. Then we have the consistency
\begin{equation*}
\big\Vert \whn - \theta_n \big\Vert \cvgp 0
\end{equation*}
where $\whn$ is the OLS estimator \eqref{Intro_OLSMulti} in the nearly unstable AR$(p)$ process \eqref{Intro_NearlyUnstableAR}.
\end{thm}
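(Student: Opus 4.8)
The plan is to start from the exact decomposition of the estimation error. Writing $X_{n,\,k} = \theta_n\T \Phi_{n,\,k-1} + \veps_k$ from \eqref{Intro_NearlyUnstableAR} and substituting into \eqref{Intro_OLSMulti}, one gets
\begin{equation*}
\whn - \theta_n = S_{n,\,n-1}^{-1}\, M_n, \hsp M_n \eqdef \sum_{k=1}^n \Phi_{n,\,k-1}\,\veps_k,
\end{equation*}
where the increments $\Phi_{n,\,k-1}\,\veps_k$ are martingale differences with respect to $\cF_k = \sigma(\Phi_{n,\,0},\veps_1,\ldots,\veps_k)$, since $\Phi_{n,\,k-1}$ is $\cF_{k-1}$-measurable and $\dE[\veps_k \mid \cF_{k-1}] = 0$. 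The whole difficulty is that $S_{n,\,n-1}$ is strongly anisotropic: by Proposition \ref{Prop_Cov} its mass concentrates, at rate $n/(1-\rho_n)$, in the single direction carried by the unit root, whereas the other directions only grow at rate $n$. The right device is exactly the balancing matrix hidden in \eqref{MatV}--\eqref{MatW}: set $Q_n = V_n^{1/2}P_n^{-1}$ and $\Sigma_n = Q_n S_{n,\,n-1} Q_n\T$, so that, $Q_n$ being invertible for $n > n_0$,
\begin{equation*}
\whn - \theta_n = Q_n\T\, \Big(\tfrac{1}{n}\Sigma_n\Big)^{-1}\, \tfrac{1}{n}\, Q_n M_n.
\end{equation*}

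Next I would pass to the eigenbasis. With $Y_{n,\,k} = P_n^{-1}\Phi_{n,\,k}$, the standardized form \eqref{MatP} decouples \eqref{Intro_ModVAR} into $p$ scalar recursions $Y_{n,\,k}^{(j)} = \lambda_{n,\,j}\, Y_{n,\,k-1}^{(j)} + (P_n^{-1})_{j1}\,\veps_k$, and $\Sigma_n = V_n^{1/2}\big(\sum_{k} Y_{n,\,k}Y_{n,\,k}\T\big)V_n^{1/2}$. Under \ref{HypUR1} the first coordinate is a nearly unstable AR(1) with $\dE|Y_{n,\,k}^{(1)}|^2 \asymp (1-\rho_n)^{-1}$ uniformly in $k$ (a transient bound that also absorbs the arbitrary initial value), whereas for $j \geq 2$ the coordinate is stable with $|\lambda_{n,\,j}|$ bounded away from $1$, hence $\dE|Y_{n,\,k}^{(j)}|^2 = O(1)$ uniformly; the weight $V_n^{1/2} = \diag((1-\rho_n)^{1/2},1,\ldots,1)$ is precisely what rescales the first direction back to order $n$. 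A second-moment computation — the same one behind Proposition \ref{Prop_Cov}, but keeping every coordinate rather than projecting — then gives $\tfrac1n\Sigma_n \cvgp H$ with $H = \sigma^2 H_0$ of \eqref{MatH} invertible, so that $(\tfrac1n\Sigma_n)^{-1} = O_P(1)$; simultaneously $Q_n\T \rightarrow (P\T)^{-1}\diag(0,1,\ldots,1)$ is bounded and deterministic.

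It then remains to show that the normalized score vanishes, $\tfrac1n Q_n M_n = \tfrac1n V_n^{1/2}\sum_{k} Y_{n,\,k-1}\,\veps_k \cvgp 0$, which I would do coordinate by coordinate in $L^2$ using the orthogonality of the martingale increments. For $j \geq 2$,
\begin{equation*}
\dE\Big|\tfrac1n \sum_{k=1}^n Y_{n,\,k-1}^{(j)}\veps_k\Big|^2 = \frac{\sigma^2}{n^2}\sum_{k=1}^n \dE\big|Y_{n,\,k-1}^{(j)}\big|^2 = O\big(\tfrac1n\big),
\end{equation*}
while for $j=1$ the extra factor $(1-\rho_n)^{1/2}$ compensates the inflated variance,
\begin{equation*}
\dE\Big|\tfrac{(1-\rho_n)^{1/2}}{n}\sum_{k=1}^n Y_{n,\,k-1}^{(1)}\veps_k\Big|^2 = \frac{(1-\rho_n)\,\sigma^2}{n^2}\sum_{k=1}^n \dE\big|Y_{n,\,k-1}^{(1)}\big|^2 = O\big(\tfrac1n\big),
\end{equation*}
so each coordinate of $\tfrac1n Q_n M_n$ tends to $0$ in $L^2$, hence in probability. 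Combining the three facts through Slutsky's lemma (a bounded deterministic matrix times a tight inverse times a term vanishing in probability) yields $\|\whn - \theta_n\| \cvgp 0$.

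The routine part is the martingale $L^2$ bound above; the genuine obstacle is the balanced covariance limit $\tfrac1n\Sigma_n \cvgp H$ with $H$ full rank, that is, proving that the blocks coupling the integrated first direction to the stationary ones vanish after the asymmetric normalization by $V_n^{1/2}$ — by Cauchy--Schwarz these cross sums are a priori only $O(n(1-\rho_n)^{-1/2})$, and one must show they are genuinely smaller so that the $(1-\rho_n)^{1/2}$ weight annihilates them. For consistency alone one actually needs only the weaker statement $(\tfrac1n\Sigma_n)^{-1} = O_P(1)$, i.e. a lower bound on the smallest eigenvalue, but this already demands the same coordinate-wise control. Finally, when $A$ carries complex eigenvalues they occur in conjugate pairs, so the relevant entries of $\tfrac1n\Sigma_n$ and of $H$ are conjugate and the limits stay real; using the plain transpose rather than the conjugate transpose in $\Sigma_n$ is exactly what makes the moments $\dE[Y^{(i)}Y^{(j)}]$ reproduce the entries $\pi_{i1}\pi_{j1}/(1-\lambda_i\lambda_j)$ of \eqref{MatH}.
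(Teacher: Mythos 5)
Your skeleton coincides with the paper's: the identity $\whn - \theta_n = Q_n\T\,(\tfrac{1}{n}\Sigma_n)^{-1}\,\tfrac{1}{n}Q_nM_n$ with $Q_n = V_n^{\,1/2}P_n^{-1}$ is exactly the paper's decomposition \eqref{EstErr}, the weighted covariance limit you invoke is \eqref{CvgSnRenorm}, and the conclusion follows as in the paper by boundedness of $Q_n\T$, invertibility of the limit $H$, and a vanishing weighted score. Your treatment of the score term is correct and needs only second moments; it is even slightly sharper than the paper's Lemma \ref{Lem_MartTerm}, which bounds the unweighted $Z_n/n$ by $O_P(\sqrt{v_n/n})$ and then absorbs the bounded factor $V_n^{\,1/2}P_n^{-1}$, whereas your coordinate-wise martingale orthogonality gives $O_P(n^{-1/2})$ for the weighted score (a refinement in the spirit of what Theorem \ref{Thm_Norm} requires, though either version suffices here).

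The gap is the step you yourself flag as the genuine obstacle: $\tfrac{1}{n}\Sigma_n \cvgp H$ with $H$ invertible. You assert that it follows from ``a second-moment computation, the same one behind Proposition \ref{Prop_Cov} but keeping every coordinate'', but this step is the heart of the paper's proof and it cannot be carried out the way your phrase suggests. A literal mean-plus-variance (Chebyshev) argument on the entries $\tfrac{1}{n}\sum_k Y^{(i)}_{n,k}Y^{(j)}_{n,k}$ requires the variance of quadratic functionals of the noise, hence $\dE[\veps_1^{\,4}] < +\infty$, which is not assumed (the theorem only requires $\dE[\veps_1^{\,2}] < +\infty$); your expectation computation does show the weighted cross entries have vanishing mean, but without fourth moments no concentration follows from it. The paper's missing idea is Lemma \ref{Lem_DecompVar}: the generalized Sylvester (Stein) equation yields the exact identity $\vec(S_{n,\,n-1}) = B_n^{-1}\big[\vec(T_n) + (I_p \otimes A_n)\vec(Z_n) + (A_n \otimes I_p)\vec(Z_n\T) + \vec(L_n)\big]$, so that after multiplication by $W_n$ of \eqref{MatW} the entire anisotropy is carried by the \emph{deterministic} matrix $W_nB_n^{-1}$, whose limit \eqref{LimWB} is computed exactly in the eigenbasis: there the weights annihilate the cross-coupling entries algebraically, since $\sqrt{1-\rho_n}\,/(1-\lambda_{n,\,1}\lambda_{n,\,j}) \rightarrow 0$ for $j \geq 2$. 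The stochastic inputs then reduce to $T_n/n \cvgp 0$ and $Z_n/n \cvgp 0$ (Lemmas \ref{Lem_IsolTerm} and \ref{Lem_MartTerm}, pure Markov/$L^2$ arguments) and the i.i.d. law of large numbers $L_n/n \cvgp \sigma^2 K_p$ in \eqref{CvgL}, which holds under $\dE[\veps_1^{\,2}] < +\infty$ because the $(\veps_k^{\,2})$ are i.i.d. with finite mean. In short, the cross blocks vanish deterministically, not through a variance estimate; to make your ``coordinate-wise'' version rigorous you would need the scalar analogue of this device, namely the recursions $(1-\lambda_{n,\,i}\lambda_{n,\,j})\sum_k Y^{(i)}_{n,\,k-1}Y^{(j)}_{n,\,k-1} = \text{boundary} + \text{martingale terms} + \pi^{(n)}_{i1}\pi^{(n)}_{j1}\sum_k \veps_k^{\,2}$, solved for the quadratic sums. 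Without this ingredient the proof is incomplete at its crucial point.
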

\begin{proof}
See Section \ref{Sec_ProofConsist}.
\end{proof}

For the asymptotic normality, we need a slightly stronger hypothesis on the noise $(\veps_k)$. Let us assume from now on that there exists a moment of order strictly greater than 2.

\begin{thm}
\label{Thm_Norm}
Assume that \ref{HypCA}, \ref{HypUR1} and \ref{HypSR} hold, and that $\dE[\vert \veps_1 \vert^{\, 2+\nu}] = \eta_{\nu} < +\infty$ for some $\nu > 0$. Then, if the eigenvalues of $A$ are real, we have the asymptotic normality
\begin{equation*}
\sqrt{n}\, V_{n}^{-1/2} P_n^T\, \big( \whn - \theta_n \big) \cvgd \cN_p(0,\, H_0^{-1})
\end{equation*}
where $\whn$ is the OLS estimator \eqref{Intro_OLSMulti} in the nearly unstable AR$(p)$ process \eqref{Intro_NearlyUnstableAR}, the matrix of rates $V_n$ is given in \eqref{MatV}, $P_n$ is given in \eqref{MatP}, and the standardized positive definite precision matrix $H_0$ is given in \eqref{MatH}. If some eigenvalues of $A$ are complex, we have the asymptotic normality
\begin{equation*}
\sqrt{n\, v_n}\, \big\langle L_n,\, \whn - \theta_n \big\rangle \cvgd \cN(0,\, h_0^2)
\end{equation*}
where
\begin{equation*}
L_n = \left( 1, \frac{1}{\lambda_1 \rho_n}, \ldots, \frac{1}{(\lambda_1 \rho_n)^{p-1}} \right)\T \hsp \text{and} \hsp h_0^2 = \frac{2c}{\pi_{11}^2} > 0.
\end{equation*}
\end{thm}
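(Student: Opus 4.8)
The plan is to pass to the eigenbasis of $A_n$ and reduce the statement to a central limit theorem for a triangular array of martingales. Since $A_n = P_n D_n P_n^{-1}$ for $n > n_0$, set $Y_{n,\, k} = P_n^{-1}\Phi_{n,\, k}$, so that $Y_{n,\, k} = D_n Y_{n,\, k-1} + P_n^{-1}E_k$ \emph{decouples} into $p$ scalar recursions $Y_{n,\, k}^{(i)} = \lambda_{n,\, i}\, Y_{n,\, k-1}^{(i)} + \pi_{i1}^{(n)}\veps_k$, where $\pi_{i1}^{(n)} = (P_n^{-1})_{i1} \to \pi_{i1}$ because $E_k = \veps_k e_p$. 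Writing $\widetilde{S}_{n,\, n-1} = \sum_{k=0}^{n-1}Y_{n,\, k}Y_{n,\, k}\T$ and $\widetilde{M}_{n} = \sum_{k=1}^n Y_{n,\, k-1}\veps_k$, the identity $\whn - \theta_n = S_{n,\, n-1}^{-1}M_n$ with $M_n = \sum_{k=1}^n\Phi_{n,\, k-1}\veps_k$ becomes, after conjugation by $P_n$,
\begin{equation*}
P_n\T\big(\whn - \theta_n\big) = \widetilde{S}_{n,\, n-1}^{-1}\, \widetilde{M}_{n}, \hsp \text{so that} \hsp \sqrt{n}\, V_n^{-1/2}P_n\T\big(\whn-\theta_n\big) = \Big(\tfrac{1}{n}V_n^{1/2}\widetilde{S}_{n,\, n-1}V_n^{1/2}\Big)^{-1} \tfrac{1}{\sqrt n}V_n^{1/2}\widetilde{M}_{n}.
\end{equation*}
The proof then treats the two factors on the right separately.

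For the first factor I would show $\frac{1}{n}V_n^{1/2}\widetilde{S}_{n,\, n-1}V_n^{1/2}\cvgp H = \sigma^2 H_0$ by examining the three blocks produced by $V_n = \diag(1-\rho_n, 1, \ldots, 1)$. The $(1,1)$ entry, scaled by $1-\rho_n$, is exactly the dominant direction of Proposition \ref{Prop_Cov} transported to the eigenbasis, so the near-unit-root weak law there gives $(1-\rho_n)\frac{1}{n}\sum_k (Y_{n,\, k}^{(1)})^2\cvgp\frac{\pi_{11}^2\sigma^2}{2} = (H)_{11}$. The first-row and first-column entries are scaled by $\sqrt{1-\rho_n}\to 0$ while $\frac{1}{n}\sum_k Y_{n,\, k}^{(1)}Y_{n,\, k}^{(j)} = O_{\dP}(1)$, so they vanish, matching the zeros of $H_0$. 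The lower-right $(p-1)\times(p-1)$ block is unscaled and governed by the \emph{stable} components $Y^{(2)},\ldots,Y^{(p)}$: an ergodic/$L^2$ argument on their stationary cross-covariances $\frac{\pi_{i1}\pi_{j1}\sigma^2}{1-\lambda_i\lambda_j}$ produces the corresponding block of $H$. Positive definiteness of $H_0$ ensures the limiting inverse is well defined.

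For the second factor, take the filtration $\cF_k = \sigma(\Phi_{n,\, 0}, \veps_1, \ldots, \veps_k)$ and the array of martingale differences $\xi_{n,\, k} = \frac{1}{\sqrt n}V_n^{1/2}Y_{n,\, k-1}\veps_k$. Its predictable quadratic variation is $\sum_k\dE[\xi_{n,\, k}\xi_{n,\, k}\T\mid\cF_{k-1}] = \frac{\sigma^2}{n}V_n^{1/2}\widetilde{S}_{n,\, n-1}V_n^{1/2}\cvgp\sigma^2 H = \sigma^4 H_0$, i.e. $\sigma^2$ times the limit of the first factor. The Lindeberg condition follows from a Lyapunov bound: using $\dE[|Y_{n,\, k-1}^{(1)}|^{2+\nu}]\asymp v_n^{(2+\nu)/2}$ for the near-unit-root coordinate, $\dE[|\veps_1|^{2+\nu}] = \eta_\nu$ and $(1-\rho_n)\, v_n = c$, the first-coordinate contribution satisfies
\begin{equation*}
\sum_{k=1}^n\dE\Big[\big| \tfrac{1}{\sqrt n}(1-\rho_n)^{1/2}Y_{n,\, k-1}^{(1)}\veps_k \big|^{2+\nu}\Big] \asymp \frac{n\, \eta_\nu}{n^{(2+\nu)/2}}\,\big((1-\rho_n)\, v_n\big)^{(2+\nu)/2} = \frac{c^{(2+\nu)/2}\, \eta_\nu}{n^{\nu/2}} \longrightarrow 0,
\end{equation*}
the stable coordinates being treated identically but more easily. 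The martingale CLT then yields $\frac{1}{\sqrt n}V_n^{1/2}\widetilde{M}_{n}\cvgd\cN_p(0,\sigma^4 H_0)$, and Slutsky combined with the first factor gives $H^{-1}\cN_p(0,\sigma^4 H_0) = \cN_p(0, H^{-1}\sigma^4 H_0\, H^{-1}) = \cN_p(0, H_0^{-1})$, which is the real-spectrum conclusion.

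For the complex case I would project onto the first coordinate. The eigenvector of the dominant \emph{real} eigenvalue $\lambda_{n,\, 1} = \lambda_1\rho_n$ is precisely $L_n$, the first column of $P_n$, so $\langle L_n, \whn-\theta_n\rangle$ is the first entry of $P_n\T(\whn-\theta_n) = \widetilde{S}_{n,\, n-1}^{-1}\widetilde{M}_{n}$. Since the block structure of $\sigma^2 H_0$ decouples coordinate $1$, one has $(\widetilde{S}_{n,\, n-1}^{-1})_{11}\sim\frac{1-\rho_n}{n\sigma^2}(H_0^{-1})_{11}$ with the other entries of the first row negligible, so $\langle L_n, \whn-\theta_n\rangle$ is asymptotically a constant multiple of the \emph{scalar, real} martingale $\widetilde{M}_{n}^{(1)} = \sum_k Y_{n,\, k-1}^{(1)}\veps_k$. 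This reduces the statement to a one-dimensional martingale CLT involving only the real unit-root coordinate, which is insensitive to whether $\lambda_2, \ldots, \lambda_p$ are complex (the full real vector CLT fails precisely because the stable complex coordinates are $\dC$-valued and their real limit covariance in the basis $P$ is degenerate). Tracking the constants, and noting that $\sqrt{n\, v_n}$ differs from $\sqrt{n}(1-\rho_n)^{-1/2}$ by the factor $\sqrt{c}$, gives the rate $\sqrt{n\, v_n}$ and the variance $h_0^2 = c\,(H_0^{-1})_{11} = 2c/\pi_{11}^2$. The main obstacle throughout is the behaviour of the near-unit-root coordinate under the non-standard renormalization $V_n$: one must guarantee that its quadratic variation concentrates at a \emph{deterministic} constant rather than at a random, Dickey--Fuller type limit. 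This is exactly where the regime $1\ll v_n\ll n$ is decisive, since for $v_n\asymp n$ the sum $(1-\rho_n)\frac{1}{n}\sum (Y^{(1)})^2$ would converge to a functional $\propto\int_0^1 J_c^2$ of an Ornstein--Uhlenbeck process and destroy Gaussianity; Proposition \ref{Prop_Cov} supplies the required concentration, leaving the Lindeberg verification for a coordinate of diverging variance as the remaining delicate point, for which the moment $\eta_\nu$ is tailored to produce the decay $n^{-\nu/2}$ above.
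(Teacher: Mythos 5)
Your proposal is correct and follows essentially the same route as the paper's own proof: diagonalize via $P_n$, establish the multi-rate limit $V_n^{\,1/2}P_n^{-1}\tfrac{S_{n,\,n-1}}{n}(P_n^{-1})\T V_n^{\,1/2} \cvgp H = \sigma^2 H_0$ (the paper's \eqref{CvgSnRenorm}, whose $(1,1)$ entry, vanishing cross terms and stable block match your three-block analysis), apply a martingale CLT to the renormalized score, conclude by Slutsky, and in the complex case project onto the first coordinate using that the first row of $P_n^{-1}$ is real and that $L_n$ is the first column of $P_n$, so only the real unit-root coordinate matters. The only minor variants are technical: you check a Lyapunov condition (which needs a Rosenthal-type bound to justify $\dE[\vert Y^{(1)}_{n,\,k}\vert^{2+\nu}] \lesssim v_n^{(2+\nu)/2}$) and invoke a vector martingale CLT, whereas the paper verifies the conditional Lindeberg condition via H\"older--Markov inequalities and reduces to scalar martingales through the Cram\'er--Wold device.
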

\begin{proof}
See Section \ref{Sec_ProofNorm}.
\end{proof}

\begin{cor}
\label{Cor_Norm}
Under the same assumptions as in Theorem \ref{Thm_Norm} with $p=1$, we have the asymptotic normality
\begin{equation*}
\sqrt{n\, v_n}\, \big( \whn - \theta_n \big) \cvgd \cN(0,\, 2c).
\end{equation*}
\end{cor}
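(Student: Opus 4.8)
The plan is to obtain the corollary as a direct specialization of the real-spectrum branch of Theorem \ref{Thm_Norm} to the scalar case $p=1$, so that no new probabilistic argument is needed; the whole task reduces to evaluating the objects $V_n$, $P_n$ and $H_0$ when $p=1$ and checking that they collapse to scalars producing the announced variance. Since the argument is purely computational, I do not expect a genuine obstacle, only two points deserving care (flagged below).

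First I would note that for $p=1$ the companion matrix degenerates to the scalar $A_n = (\theta_{n,\, 1})$, whose unique eigenvalue is $\lambda_{n,\, 1} = \theta_{n,\, 1}$, converging under \ref{HypUR1} to $\lambda_1 = \pm 1$. In particular $\lambda_1$ is real, so the \emph{first} conclusion of Theorem \ref{Thm_Norm} applies. Moreover $\rho_n = \vert \theta_n \vert$ gives $1 - \rho_n = c/v_n$ by \ref{HypSR}, whether the unit root sits at $+1$ or at $-1$, and this sign-invariance is exactly why the statement holds uniformly in both cases.

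Next I would compute each ingredient of the limiting statement. From the standardized form \eqref{MatP} with $p=1$ one reads off $P_n = P = (1)$, hence $P_n^{T} = 1$ and $P^{-1} = (1)$, so that the single entry of the first column of $P^{-1}$ is $\pi_{11} = 1$. The matrix of rates \eqref{MatV} reduces to the scalar $V_n = 1 - \rho_n = c/v_n$, whence $V_n^{-1/2} = (v_n/c)^{1/2}$. Finally the precision matrix \eqref{MatH} collapses to its top-left corner, $H_0 = \pi_{11}^2/2 = 1/2$, so that $H_0^{-1} = 2$.

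Substituting these scalars into $\sqrt{n}\, V_n^{-1/2} P_n^{T}(\whn - \theta_n) \cvgd \cN(0, H_0^{-1})$ yields
\[
\sqrt{n}\,(v_n/c)^{1/2}\,\big( \whn - \theta_n \big) \cvgd \cN(0, 2),
\]
and multiplying through by the constant $\sqrt{c} > 0$ — a continuous-mapping/Slutsky step — gives $\sqrt{n\, v_n}\,(\whn - \theta_n) \cvgd \cN(0, 2c)$, the claim. The only two points I would treat carefully are the verification that the standardized eigenvector ``matrix'' is exactly $1$ in dimension one (so indeed $\pi_{11} = 1$ and $H_0 = 1/2$), and the bookkeeping of the factor $\sqrt{c}$ moving from the normalization into the asymptotic variance. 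I would close by remarking that the limit $2c$ matches the variance obtained by \citet{PhillipsMagdalinos07}, so that Theorem \ref{Thm_Norm} recovers their scalar result as a special case.
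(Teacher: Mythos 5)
Your proposal is correct and takes exactly the same route as the paper, which proves the corollary by specializing Theorem \ref{Thm_Norm} with $V_n = 1-\rho_n = c/v_n$, $P_n = 1$ and $H_0 = 1/2$; your computations of these scalars (including $\pi_{11}=1$ and the $\sqrt{c}$ rescaling) are accurate. The paper simply states this specialization in one line, so your write-up is just a more detailed account of the identical argument.
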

\begin{proof}
Just take $V_n = 1-\rho_n = c/v_n$, $P_n = 1$ and $H_0 = 1/2$ in Theorem \ref{Thm_Norm}.
\end{proof}

One can note that this is precisely the statement of Thm. 3.2(c) of \citet{PhillipsMagdalinos07} when the unit root is positive ($\lambda_1=1$). Nevertheless, our result also holds for a negative unit root ($\lambda_1 = -1$) whereas in the reference just mentioned, an extra symmetry argument is needed on the distribution of $(\veps_k)$. We may therefore say that Theorem \ref{Thm_Norm} reinforces and extends the result to the general near-stationary setting.

\begin{rem}
On the basis of Theorem \ref{Thm_Consist}, the triangle inequality directly implies that, under the same hypotheses,
\begin{equation}
\label{ConsistOLS}
\whn \cvgp \theta.
\end{equation}
But it is important to note that Theorem \ref{Thm_Norm} cannot lead to the corresponding asymptotic normality. To understand this, consider for example the univariate setting of Corollary \ref{Cor_Norm} where $\theta_n - \theta \propto 1/v_n$. Then,
\begin{equation*}
\sqrt{n\, v_n}\, \big\vert \theta_n - \theta \big\vert \propto \sqrt{\frac{n}{v_n}} ~ \longrightarrow ~ +\infty.
\end{equation*}
However, this is not surprising. Indeed, the limit AR process generated by $\theta$ is unstable. In that case, the OLS estimate is still consistent but not asymptotically normal.
\end{rem}

In terms of estimation, the mixing induced by $P_n^T$ is somewhat troublesome because it relies on the true and unknown eigenvalues of $A_n$. Since the eigenvalues of a square matrix depend continuously on its entries, see \textit{e.g.} Thm. 2.4.9.2 of \citet{HornJohnson92}, the consistency of $\whn$ would justify replacing $A_n$ by $\wh{A}_n$ before calculating its spectrum, so as to get consistent estimates of all quantities appearing in Theorem \ref{Thm_Norm} and build hypotheses tests. For simulations (Section \ref{Sec_Appli}), the following corollary will be used to illustrate the latter result.
\begin{cor}
\label{Cor_NormMarg}
Assume that the assumptions of Theorem \ref{Thm_Norm} hold. Then, for $\lambda_1 = \pm1$, we have the asymptotic distribution
\begin{equation*}
\frac{\pi_{11}^{\, 2}\, n\, v_n}{2\, c}\, \left[ \sum_{i=1}^{p} \frac{1}{(\lambda_1 \rho_n)^{\, i-1}}\, \big( \wh{\theta}_{n,\, i} - \theta_{n,\, i} \big) \right]^{\, 2} \cvgd \chi^2_1.
\end{equation*}
\end{cor}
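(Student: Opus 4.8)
The plan is to recognize that the bracketed quantity in the statement is exactly the inner product $\langle L_n,\, \whn - \theta_n \rangle$ from Theorem \ref{Thm_Norm}: since $L_n = (1,\, (\lambda_1\rho_n)^{-1},\, \ldots,\, (\lambda_1\rho_n)^{-(p-1)})\T$, its $i$-th contribution is precisely $(\lambda_1\rho_n)^{-(i-1)}(\wh{\theta}_{n,\, i} - \theta_{n,\, i})$. Thus the corollary is nothing but the scalar central limit theorem of Theorem \ref{Thm_Norm}, standardized and squared. The only genuine task is to confirm that the \emph{same} scalar convergence $\sqrt{n\, v_n}\, \langle L_n,\, \whn-\theta_n\rangle \cvgd \cN(0,\, h_0^2)$ holds in both regimes of that theorem, with $h_0^2 = 2c/\pi_{11}^2$.

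When some eigenvalues of $A$ are complex there is nothing to prove: Theorem \ref{Thm_Norm} states this convergence directly. When all eigenvalues of $A$ are real, I would instead extract the first coordinate of the vector convergence $\sqrt{n}\, V_n^{-1/2} P_n^T (\whn-\theta_n) \cvgd \cN_p(0,\, H_0^{-1})$. Because the dominant eigenvalue is real near $\pm 1$, we have $\lambda_{n,\, 1} = \lambda_1 \rho_n$, so the first row of $P_n^T$ coincides with $L_n\T$; and the $(1,1)$ entry of $V_n^{-1/2}$ is $(1-\rho_n)^{-1/2} = \sqrt{v_n/c}$. The first coordinate therefore equals $\sqrt{n}\,\sqrt{v_n/c}\,\langle L_n,\, \whn-\theta_n\rangle = c^{-1/2}\sqrt{n\, v_n}\,\langle L_n,\, \whn-\theta_n\rangle$, which converges to $\cN(0,\, (H_0^{-1})_{11})$. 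Since $H_0$ in \eqref{MatH} is block-diagonal with a $1\times 1$ top-left block $\pi_{11}^2/2$ decoupled from the remaining $(p-1)\times(p-1)$ block, its inverse satisfies $(H_0^{-1})_{11} = 2/\pi_{11}^2$. Multiplying back by $\sqrt{c}$ yields $\sqrt{n\, v_n}\,\langle L_n,\, \whn-\theta_n\rangle \cvgd \cN(0,\, 2c/\pi_{11}^2)$, exactly matching the complex case.

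With this unified scalar convergence in hand, I would finish by dividing by $h_0 = \sqrt{2c/\pi_{11}^2}$ to obtain a standard Gaussian limit and then applying the continuous mapping theorem with $x \mapsto x^2$, using that the square of a $\cN(0,1)$ variable is $\chi^2_1$. Substituting $h_0^{-2} = \pi_{11}^2/(2c)$ and rewriting $\langle L_n,\, \whn-\theta_n\rangle$ as the explicit sum then gives precisely the asserted convergence.

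There is no serious obstacle here; the proof is essentially a standardization followed by a one-line continuous-mapping argument. The single point demanding care is the reconciliation of the two regimes of Theorem \ref{Thm_Norm}, which reduces entirely to the block structure of $H_0$ and the identity $\lambda_{n,\, 1} = \lambda_1 \rho_n$ that aligns the first row of $P_n^T$ with $L_n$.
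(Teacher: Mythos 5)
Your proposal is correct and follows essentially the same route as the paper: the paper's proof is exactly the first marginal of the vector convergence in Theorem \ref{Thm_Norm}, combined with the continuous mapping theorem, the block structure of $H_0$ in \eqref{MatH}, and the identification of the first column of $P_n$ with $L_n$ via $\lambda_{n,\,1} = \lambda_1\rho_n$. Your treatment merely spells out these steps (including the reconciliation with the complex-eigenvalue regime, which the paper leaves implicit), so nothing further is needed.
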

\begin{proof}
This is in fact the first marginal convergence of the asymptotic normality combined with the continuous mapping theorem, \eqref{MatH} and the fact that the first column of $P_n$ is $(1, 1/\lambda_{n,\, 1}, \ldots, 1/\lambda_{n,\, 1}^{p-1})$.
\end{proof}

Before switching to the proofs of the results, some examples are provided. The goal is to get a clear insight into the structure of the fundamental matrix $H_0$ in simple cases.

\begin{exs} Let us consider the examples $p \in \{ 2, 3, 4 \}$ with $\lambda_1 = \pm 1$ (see Corollary \ref{Cor_Norm} for $p=1$). In fact, $H_0$ has a simple form deduced from the expressions
\begin{equation*}
\forall\, 1 \leq i \leq p, \hsp f_i = \prod_{j \neq i} (\lambda_j - \lambda_i).
\end{equation*}
\begin{itemize}
\item For $p=2$ and $\sp(A) = \{ \pm 1, \lambda_2 \}$ with $0 < \vert \lambda_2 \vert < 1$,
\begin{equation*}
H_0 = \begin{pmatrix}
\frac{\lambda_1^2}{2\, f_1^{\, 2}} & 0 \\
0 & \frac{\lambda_2^2}{f_2^{\, 2}\, (1-\lambda_2^2)}
\end{pmatrix}.
\end{equation*}
\item For $p=3$ and $\sp(A) = \{ \pm 1, \lambda_2, \lambda_3 \}$ with $0 < \vert \lambda_3 \vert \leq \vert \lambda_2 \vert < 1$ and $\lambda_2 \neq \lambda_3$,
\begin{equation*}
H_0 = \begin{pmatrix}
\frac{\lambda_1^4}{2\, f_1^{\, 2}} & 0 & 0 \\
0 & \frac{\lambda_2^4}{f_2^{\, 2}\, (1-\lambda_2^2)} & \frac{\lambda_2^2\, \lambda_3^2}{f_2\, f_3\, (1-\lambda_2 \lambda_3)} \\
0 & \frac{\lambda_3^2\, \lambda_2^2}{f_3\, f_2\, (1-\lambda_3 \lambda_2)} & \frac{\lambda_3^4}{f_3^{\, 2}\, (1-\lambda_3^2)}
\end{pmatrix}.
\end{equation*}
\item For $p=4$ and $\sp(A) = \{ \pm 1, \lambda_2, \lambda_3, \lambda_4 \}$ with $0 < \vert \lambda_4 \vert \leq \vert \lambda_3 \vert \leq \vert \lambda_2 \vert < 1$ and $\lambda_2 \neq \lambda_3 \neq \lambda_4$,
\begin{equation*}
H_0 = \begin{pmatrix}
\frac{\lambda_1^6}{2\, f_1^{\, 2}} & 0 & 0 & 0 \\
0 & \frac{\lambda_2^6}{f_2^{\, 2}\, (1-\lambda_2^2)} & \frac{\lambda_2^3\, \lambda_3^3}{f_2\, f_3\, (1-\lambda_2 \lambda_3)} & \frac{\lambda_2^3\, \lambda_4^3}{f_2\, f_4\, (1-\lambda_2 \lambda_4)} \\
0 & \frac{\lambda_3^3\, \lambda_2^3}{f_3\, f_2\, (1-\lambda_3 \lambda_2)} & \frac{\lambda_3^6}{f_3^{\, 2}\, (1-\lambda_3^2)} & \frac{\lambda_3^3\, \lambda_4^3}{f_3\, f_4\, (1-\lambda_3 \lambda_4)} \\
0 & \frac{\lambda_4^3\, \lambda_2^3}{f_4\, f_2\, (1-\lambda_4 \lambda_2)} & \frac{\lambda_4^3\, \lambda_3^3}{f_4\, f_3\, (1-\lambda_4 \lambda_3)} & \frac{\lambda_4^6}{f_4^{\, 2}\, (1-\lambda_4^2)}
\end{pmatrix}.
\end{equation*}

\end{itemize}
\end{exs}

As mentioned at the beginning of the section, Proposition \ref{Prop_Cov} and Theorems \ref{Thm_Consist} and \ref{Thm_Norm} are true under \ref{HypUR2}, \textit{i.e.} when $A$ contains the eigenvalues $1$ and $-1$ ordered this way (according to lexicographic descending order). That could correspond to a situation in which $\lambda_{n,\,1} = 1 - c/v_n$ and $\lambda_{n,\,2} = -1 + d/w_n$ with $c,\,d > 0$, $1 \ll v_n, w_n \ll n$ and $c/v_n \leq d/w_n$ (otherwise switching $\lambda_{n,\,1}$ and $\lambda_{n,\,2}$ in $P_n$ and $D_n$). More details will be given in due course (Remarks \ref{Rem_RatInvB}--\ref{Rem_ProofThm}). Let us now prove our results.

\section{Technical tools}
\label{Sec_Tools}

In this section, $\cst$ denotes a generic positive constant that is not necessarily identical from one line to another and we use the conventions $\sum_{\varnothing} = 0$ and $\Pi_{\varnothing} = 1$. Let us also define a fundamental matrix for the reasoning to come,
\begin{equation}
\label{MatB}
B_n = I_{p^2} - A_n \otimes A_n.
\end{equation}
In all the sequel, $\Vert \cdot \Vert$ will refer to the Frobenius norm $\Vert \cdot \Vert_F = \Vert \vec(\cdot) \Vert_2$ induced by the inner product $\llangle \cdot, \cdot \rrangle = \llangle \cdot, \cdot \rrangle_F = \langle \vec(\cdot), \vec(\cdot) \rangle$. The distinction is not made between matrices and vectors for which we simply have $\Vert \cdot \Vert_F = \Vert \cdot \Vert_2$ and $\llangle \cdot, \cdot \rrangle_F = \langle \cdot, \cdot \rangle$.

\subsection{Linear Algebra}
\label{Sec_LinAlg}

This section gathers all the linear algebra tools that we shall need in the proofs of our results.

\begin{lem}
\label{Lem_Diag}
Assume that \ref{HypCA} holds. Then, there exists $n_0 \geq 1$ such that, for all $n > n_0$, $A_n$ is diagonalizable in the form $A_n = P_n\, D_n\, P_n^{-1}$ with $D_n = \diag(\lambda_{n,\, 1},\, \ldots,\, \lambda_{n,\, p})$ containing ordered distinct eigenvalues $1\, >\, \vert \lambda_{n,\, 1} \vert\, \geq\, \ldots\, \geq\, \vert \lambda_{n,\, p} \vert\, >\, 0$. In addition, $\Vert P_n \Vert \leq \cst$ and $\Vert P_n^{-1} \Vert \leq \cst$.
\end{lem}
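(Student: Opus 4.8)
The plan is to exploit two structural facts in tandem: the continuity of the spectrum with respect to the matrix entries, and the explicit companion (Vandermonde) form of the eigenvectors. Everything will follow once the eigenvalues are shown to be distinct and non-zero for large $n$.

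First I would handle the eigenvalues. Since $\limn A_n = A$ by \ref{HypCA} and $A$ has $p$ \emph{distinct} eigenvalues, the continuity of the eigenvalues as functions of the matrix entries provides, for $n$ large enough, a unique labeling with $\lambda_{n,\, i} \to \lambda_i$; distinctness in the limit forces the $\lambda_{n,\, i}$ to be pairwise distinct for large $n$. The non-vanishing $\vert \lambda_{n,\, p} \vert > 0$ follows either directly from $\lambda_{n,\, p} \to \lambda_p \neq 0$, or from the fact that $\prod_i \lambda_{n,\, i} = \pm\, \theta_{n,\, p} \to \pm\, \theta_p \neq 0$ (again by \ref{HypCA}). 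The strict bound $\vert \lambda_{n,\, 1} \vert < 1$ is exactly $\rho(A_n) = 1 - c/v_n < 1$ from \ref{HypSR}, and sorting by descending modulus with the stated lexicographic tie-breaking yields the announced order $1 > \vert \lambda_{n,\, 1} \vert \geq \ldots \geq \vert \lambda_{n,\, p} \vert > 0$.

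Second, distinctness of the $\lambda_{n,\, i}$ immediately gives diagonalizability $A_n = P_n\, D_n\, P_n^{-1}$. The companion structure then pins down the eigenvectors explicitly: solving $A_n v = \lambda_{n,\, i}\, v$ row by row from the bottom gives the recursion $v_{j-1} = \lambda_{n,\, i}\, v_j$, so normalizing $v_1 = 1$ (legitimate since $\lambda_{n,\, i} \neq 0$) produces precisely the column $(1, 1/\lambda_{n,\, i}, \ldots, 1/\lambda_{n,\, i}^{p-1})\T$ of \eqref{MatP}. Thus $P_n$ is the Vandermonde matrix built on the nodes $1/\lambda_{n,\, 1}, \ldots, 1/\lambda_{n,\, p}$.

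Finally, the norm bounds. For $\Vert P_n \Vert \leq \cst$, each entry $1/\lambda_{n,\, i}^{\, j}$ converges to the finite limit $1/\lambda_i^{\, j}$ (using $\lambda_i \neq 0$), so $P_n \to P$ entrywise and is therefore bounded. The main point, and the step I expect to require the most care, is the uniform bound on $P_n^{-1}$. Here I would invoke the Vandermonde determinant $\det P_n = \prod_{i<j}(1/\lambda_{n,\, j} - 1/\lambda_{n,\, i})$, whose limit $\prod_{i<j}(1/\lambda_j - 1/\lambda_i)$ is non-zero precisely because the $\lambda_i$ are distinct. Hence $P = \lim P_n$ is invertible, and by continuity of matrix inversion $P_n^{-1} \to P^{-1}$, which yields $\Vert P_n^{-1} \Vert \leq \cst$ for large $n$. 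The delicate aspect is that, absent distinctness, the Vandermonde determinant degenerates and $\Vert P_n^{-1} \Vert$ could blow up; it is exactly \ref{HypCA} (distinct spectrum of $A$) that rules this out and keeps the conditioning of $P_n$ uniformly controlled.
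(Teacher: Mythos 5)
Your proof is correct. Note that the paper does not actually prove this lemma in-text: it simply cites Lem.~3.1 of \citet{Proia20}, and your argument reconstructs precisely the ingredients that reference (and the surrounding discussion in the paper) relies on --- continuity of the spectrum to get distinct, non-vanishing eigenvalues for large $n$, the Vandermonde form \eqref{MatP} of the eigenvector basis forced by the companion structure, and non-degeneracy of the limiting Vandermonde determinant to control $\Vert P_n^{-1} \Vert$. One small remark: the strict bound $\vert \lambda_{n,\, 1} \vert < 1$ cannot follow from \ref{HypCA} alone, which is the lemma's only stated hypothesis; you correctly sourced it from \ref{HypSR}, i.e.\ from the standing stability assumption $\rho(A_n) < 1$ of the model, which is also what the paper implicitly does.
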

\begin{proof}
See Lem. 3.1 in \citet{Proia20}.
\end{proof}

\begin{lem}
\label{Lem_RowP1}
Assume that \ref{HypCA} holds. Then in the context of Lemma \ref{Lem_Diag}, for all $n > n_0$, the first row of $P_n^{-1}$ is real.
\end{lem}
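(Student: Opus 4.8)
The plan is to read off the first row of $P_n^{-1}$ through the Lagrange interpolation interpretation of the inverse Vandermonde matrix, and then to exploit the fact that $A_n$ is real together with the realness of its dominant eigenvalue. Writing $\mu_{n,j} \eqdef 1/\lambda_{n,j}$ for the nodes, the matrix $P_n$ in \eqref{MatP} is exactly the Vandermonde matrix $(P_n)_{ij} = \mu_{n,j}^{\,i-1}$. From $P_n^{-1} P_n = I_p$ one gets, for every $j$, that the polynomial $Q_j(x) = \sum_{i=1}^p (P_n^{-1})_{ji}\, x^{i-1}$ satisfies $Q_j(\mu_{n,k}) = \delta_{jk}$, so that $Q_j$ is nothing but the Lagrange basis polynomial $L_j(x) = \prod_{k \neq j} (x - \mu_{n,k})/(\mu_{n,j} - \mu_{n,k})$. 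In particular the first row of $P_n^{-1}$ is the vector of coefficients of $L_1$, and it suffices to prove that $L_1$ has real coefficients.

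First I would check that $\mu_{n,1}$ is real for $n$ large. Since $A_n$ has real entries, its spectrum --- hence the node set $\{\mu_{n,1}, \ldots, \mu_{n,p}\}$ --- is invariant under complex conjugation. By \ref{HypCA} the dominant eigenvalue $\lambda_{n,1}$ converges to the simple real unit root $\lambda_1 = \pm 1$; were $\lambda_{n,1}$ complex, its conjugate $\overline{\lambda_{n,1}}$ would be another eigenvalue of $A_n$ of equal modulus also converging to $\lambda_1$, producing two distinct eigenvalues accumulating at the simple eigenvalue $\lambda_1$, which contradicts Lemma \ref{Lem_Diag} together with the continuity of the spectrum. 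Hence, possibly after enlarging $n_0$, $\lambda_{n,1} \in \dR$ and therefore $\mu_{n,1} \in \dR$ for all $n > n_0$.

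It then remains to conclude. Since $\mu_{n,1}$ is real, removing it leaves the set $\{\mu_{n,k} : k \neq 1\}$ still closed under conjugation. Consequently both the numerator $\prod_{k \neq 1}(x - \mu_{n,k})$ and the denominator $\prod_{k \neq 1}(\mu_{n,1} - \mu_{n,k})$ of $L_1$ are invariant under conjugation of the coefficients: the non-real nodes occur in conjugate pairs whose contributions multiply to real factors, while the real node $\mu_{n,1}$ contributes real factors. Thus $L_1$ has real coefficients, which is exactly the assertion. The only genuinely delicate point is the realness of $\lambda_{n,1}$: this is where \ref{HypCA}, through the fact that the dominant root converges to the \emph{real} unit root $\pm 1$, is essential --- indeed the conclusion already fails for $p = 2$ when the two eigenvalues form a conjugate pair on the unit circle, so the argument cannot bypass this step. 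The Lagrange identification and the conjugation bookkeeping are otherwise routine.
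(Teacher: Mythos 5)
Your proof is correct and takes essentially the same route as the paper's: the paper likewise identifies the first row of $P_n^{-1}$ with the coefficient vector of the polynomial $\prod_{j=2}^{p}\bigl(X - 1/\lambda_{n,\,j}\bigr)$ (quoting the explicit inverse-Vandermonde formula (3.4) of Proia (2020)) and concludes by pairing conjugate eigenvalues, using that $\lambda_{n,\,1} = \pm\rho_n$ is real. The only differences are cosmetic and in your favor: you rederive the Lagrange identification instead of citing it, and you actually justify the realness of the dominant eigenvalue $\lambda_{n,\,1}$ by a spectral-continuity argument, a fact the paper's proof asserts without further comment.
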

\begin{proof}
This is obvious for $p=1$, so let $p > 1$. Suppose that, for $j \geq 2$, $\lambda_{n,\, j} \in \sp(A_n)$ is a complex eigenvalue so that, since $A_n$ is real, we must also have $\bar{\lambda}_{n,\, j} \in \sp(A_n)$. Taking $i=1$ in formula (3.4) of \citet{Proia20}, the first row of $P_n^{-1}$ is written
\begin{equation*}
\left(\frac{T_{n,\,1,\,k}\,\lambda_{n,\, 1}^{p-1}\, \prod_{j=2}^p \lambda_{n,\, j}}{\prod_{j=2}^p (\lambda_{n,\, j} - \lambda_{n,\, 1})} \right)_{1\, \leq\, k\, \leq\, p}
\end{equation*} \\
where $T_{n,\,1,\,k}$ can be retrieved from the relation
\begin{equation*}
\prod_{j=2}^p \left(X - \frac{1}{\lambda_{n,\, j}}\right) = T_{n,\,1,\,1} + T_{n,\,1,\,2}\, X + \ldots + T_{n,\,1,\,p}\, X^{p-1}
\end{equation*}
for real values of $X$. Combining such terms together with the fact that, for any $x \in \dR$ and for any $z \in \dC$,
\begin{equation*}
    (x-z)(x-\bar{z}) = x^2 + \vert z \vert^2  - 2\,\re(z)x \, \in \dR 
    \hsp \text{and} \hsp \lambda_{n,\, 1} = \pm \rho_n \in \dR, 
\end{equation*}
each entry of the first row of $P_n^{-1}$ is real.
\end{proof}

\begin{lem}
\label{Lem_ColP1}
Assume that \ref{HypCA} holds. Then, each element of the first column of $P^{-1}$ is non-zero.
\end{lem}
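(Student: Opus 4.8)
The plan is to obtain an explicit formula for the $i$-th entry of the first column of $P^{-1}$ and to read off its non-vanishing directly, rather than arguing abstractly. Since $P_n$ is the standardized Vandermonde matrix \eqref{MatP} built on the nodes $1/\lambda_{n,\, 1}, \ldots, 1/\lambda_{n,\, p}$, and since the first column of $P^{-1}$ is the entrywise limit of the first column of $P_n^{-1}$, I would reuse exactly the closed form of $P_n^{-1}$ already invoked in Lemma \ref{Lem_RowP1}, namely formula (3.4) of \citet{Proia20}, but now specialized to the column index $k=1$ instead of the row index $i=1$. This should give, for every $n > n_0$ and every $1 \leq i \leq p$,
\[
(P_n^{-1})_{i1} = \frac{T_{n,\, i,\, 1}\, \lambda_{n,\, i}^{\, p-1}\, \prod_{j \neq i} \lambda_{n,\, j}}{\prod_{j \neq i} (\lambda_{n,\, j} - \lambda_{n,\, i})},
\]
where $T_{n,\, i,\, 1}$ is the constant term of the polynomial $\prod_{j \neq i} (X - 1/\lambda_{n,\, j})$.

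The key simplification step is to evaluate $T_{n,\, i,\, 1}$ by setting $X = 0$, which yields $T_{n,\, i,\, 1} = (-1)^{p-1} \prod_{j \neq i} \lambda_{n,\, j}^{-1}$; substituting and cancelling the factor $\prod_{j \neq i} \lambda_{n,\, j}$ collapses the expression to
\[
(P_n^{-1})_{i1} = \frac{(-1)^{p-1}\, \lambda_{n,\, i}^{\, p-1}}{\prod_{j \neq i} (\lambda_{n,\, j} - \lambda_{n,\, i})}, \qquad 1 \leq i \leq p.
\]
Because the eigenvalues of $A_n$ converge to the pairwise distinct eigenvalues of $A$ under \ref{HypCA}, each denominator converges to $\prod_{j \neq i} (\lambda_j - \lambda_i) \neq 0$, so that letting $n \rightarrow +\infty$ produces the clean formula
\[
\pi_{i1} = \frac{(-1)^{p-1}\, \lambda_i^{\, p-1}}{\prod_{j \neq i} (\lambda_j - \lambda_i)}, \qquad 1 \leq i \leq p,
\]
which is also a useful by-product since it matches the diagonal pattern of $H_0$ in the Examples (with $\pi_{i1}^2 = \lambda_i^{2(p-1)}/f_i^{\, 2}$).

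It then remains to check non-vanishing, which is where the hypotheses enter. The denominator is a finite product of factors $\lambda_j - \lambda_i$ with $j \neq i$, each non-zero since the $\lambda_j$ are pairwise distinct by \ref{HypCA}, hence the denominator itself is non-zero and finite. The numerator $(-1)^{p-1} \lambda_i^{\, p-1}$ is non-zero precisely because the ordering $0 < \vert \lambda_p \vert \leq \vert \lambda_i \vert$ in \ref{HypCA} forces every eigenvalue, and therefore every power $\lambda_i^{\, p-1}$, to be non-zero; equivalently this reflects the assumption $\theta_p \neq 0$, i.e. $0 \notin \sp(A)$. Each $\pi_{i1}$ is thus a quotient of two non-zero (possibly complex) numbers, hence non-zero, which proves the claim. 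I do not anticipate a genuine obstacle here: the only point requiring care is to keep the denominators bounded away from $0$ when passing to the limit, and this is guaranteed once and for all by the distinctness of the spectrum of $A$.
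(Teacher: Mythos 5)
Your proof is correct and follows essentially the same route as the paper: both specialize formula (3.4) of \citet{Proia20} to the first column of the inverse eigenvector matrix, arrive at the identical closed form $\pi_{k1} = (-\lambda_k)^{p-1}/\prod_{\ell \neq k}(\lambda_\ell - \lambda_k)$, and conclude from the distinctness and non-vanishing of the eigenvalues under \ref{HypCA}. The only difference is that you spell out the evaluation of the constant term $T_{n,\,i,\,1}$ and the passage to the limit $n \rightarrow +\infty$, steps the paper leaves implicit.
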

\begin{proof}
We remind that the entries of the first column of $P^{-1}$ are denoted by $\pi_{k1}$. Then, taking $j=1$ in the same formula as in the previous proof,
\begin{equation}\label{pik1}
\pi_{k1} = \frac{(-\lambda_k)^{p-1}}{\prod_{\ell\, \neq\, k} (\lambda_\ell - \lambda_k)}.
\end{equation}
Under \ref{HypCA}, all eigenvalues are distinct and non-zero, which concludes the proof.
\end{proof}

\begin{lem}
\label{Lem_InvB}
Assume that \ref{HypSR} holds. Then, for all $n \geq 1$, the matrix $B_n$ given in \eqref{MatB} is invertible.
\end{lem}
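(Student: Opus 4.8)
The plan is to reduce the invertibility of $B_n = I_{p^2} - A_n \otimes A_n$ to a statement about its spectrum, and then to use \ref{HypSR} to keep that spectrum away from the origin. A square matrix is invertible precisely when $0$ is not one of its eigenvalues, so I would show that every eigenvalue of $B_n$ is nonzero, equivalently that $1$ is not an eigenvalue of $A_n \otimes A_n$.

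First I would recall the classical description of the spectrum of a Kronecker product: if $\lambda_{n,\, 1}, \ldots, \lambda_{n,\, p}$ denote the eigenvalues of $A_n$ (repeated according to multiplicity, over $\dC$), then the $p^2$ eigenvalues of $A_n \otimes A_n$ are exactly the products $\lambda_{n,\, i}\, \lambda_{n,\, j}$ for $1 \leq i, j \leq p$. Consequently the eigenvalues of $B_n$ are $1 - \lambda_{n,\, i}\, \lambda_{n,\, j}$, and
\[
\det B_n = \prod_{i=1}^{p} \prod_{j=1}^{p} \big( 1 - \lambda_{n,\, i}\, \lambda_{n,\, j} \big).
\]
Under \ref{HypSR} we have $\rho(A_n) = \rho_n = 1 - c/v_n < 1$ for every $n \geq 1$, hence $\vert \lambda_{n,\, i} \vert \leq \rho_n < 1$ for all $i$ and therefore $\vert \lambda_{n,\, i}\, \lambda_{n,\, j} \vert \leq \rho_n^{\, 2} < 1$. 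In particular no product $\lambda_{n,\, i}\, \lambda_{n,\, j}$ can equal $1$, so none of the factors above vanishes and $\det B_n \neq 0$, which gives the invertibility. Equivalently, since $\rho(A_n \otimes A_n) = \rho_n^{\, 2} < 1$, the Neumann series $\sum_{\ell\, \geq\, 0} (A_n \otimes A_n)^\ell$ converges and furnishes $B_n^{-1}$ explicitly, which is in fact the form that will be useful later.

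I do not expect any real obstacle here; the only subtlety worth flagging is that the statement is claimed for all $n \geq 1$, whereas the diagonalization of Lemma \ref{Lem_Diag} is only available for $n > n_0$. I would therefore avoid invoking diagonalizability and rely instead on the Kronecker spectrum identity, which holds for arbitrary (possibly non-diagonalizable) $A_n$ via simultaneous triangularization. The bound $\vert \lambda_{n,\, i} \vert \leq \rho_n$ then follows from the very definition of the spectral radius, so the argument is uniform in $n$ and uses \ref{HypSR} only through the strict inequality $\rho_n < 1$.
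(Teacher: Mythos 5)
Your proof is correct and follows essentially the same route as the paper: both arguments rest on the Kronecker spectral identity giving $\rho(A_n \otimes A_n) = \rho_n^{\,2} < 1$ under \ref{HypSR}, from which invertibility of $B_n = I_{p^2} - A_n \otimes A_n$ follows (the paper cites Cor.~5.6.16 of Horn and Johnson, i.e.\ exactly the Neumann-series fact you mention as an equivalent formulation). Your explicit remark that the argument avoids diagonalizability and hence holds for all $n \geq 1$, not just $n > n_0$, is a fair point of care, and is consistent with the paper's proof, which likewise never invokes Lemma \ref{Lem_Diag}.
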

\begin{proof}
Indeed, since $\rho(A_n \otimes A_n) = \rho_n^{\, 2} < 1$ under \ref{HypSR}, the conclusion follows \textit{e.g.} from Cor. 5.6.16 of \citet{HornJohnson92}.
\end{proof}

\begin{lem}
\label{Lem_RatInvB}
Assume that \ref{HypCA}, \ref{HypUR1} and \ref{HypSR} hold. Then,
\begin{equation*}
\limn (1-\rho_n)\, B_n^{-1} = \frac{1}{2}\, (A^* \otimes A^*)
\end{equation*}
where $A^*$ is defined in Proposition \ref{Prop_Cov}.
\end{lem}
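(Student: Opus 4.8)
The plan is to exploit the diagonalisation of Lemma~\ref{Lem_Diag} together with the multiplicativity of the Kronecker product, so as to reduce the inversion of $B_n$ to that of a diagonal matrix. Writing $A_n = P_n D_n P_n^{-1}$ with $D_n = \diag(\lambda_{n,\,1}, \ldots, \lambda_{n,\,p})$, the mixed-product rule gives $A_n \otimes A_n = (P_n \otimes P_n)(D_n\otimes D_n)(P_n\otimes P_n)^{-1}$, whence
\[
B_n = (P_n \otimes P_n)\big(I_{p^2} - D_n\otimes D_n\big)(P_n\otimes P_n)^{-1}.
\]
Since $D_n \otimes D_n$ is diagonal with entries $\lambda_{n,\,i}\lambda_{n,\,j}$ for $1\leq i,j\leq p$, the central factor $I_{p^2} - D_n\otimes D_n$ is diagonal with entries $1 - \lambda_{n,\,i}\lambda_{n,\,j}$. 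These are non-zero by Lemma~\ref{Lem_InvB}, so $B_n^{-1} = (P_n\otimes P_n)\,\Delta_n\,(P_n\otimes P_n)^{-1}$ with $\Delta_n$ diagonal of entries $(1-\lambda_{n,\,i}\lambda_{n,\,j})^{-1}$.

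The heart of the argument is the elementwise limit of $\Lambda_n \eqdef (1-\rho_n)\,\Delta_n$. First I would argue that, under \ref{HypUR1}, the dominant eigenvalue $\lambda_{n,\,1}$ is real and equals $\pm\rho_n$: it converges to $\lambda_1 = \pm 1$, and a complex value would force its conjugate to be a distinct eigenvalue of the same modulus, contradicting $\vert\lambda_2\vert < 1$ for large $n$. Consequently the $(1,1)$ entry of $\Lambda_n$ is
\[
\frac{1-\rho_n}{1 - \lambda_{n,\,1}^{\,2}} = \frac{1-\rho_n}{1-\rho_n^{\,2}} = \frac{1}{1+\rho_n} \longrightarrow \frac{1}{2}.
\]
For every other pair $(i,j) \neq (1,1)$ at least one index exceeds $1$, so $\lambda_{n,\,i}\lambda_{n,\,j} \to \lambda_i\lambda_j$ with $\vert\lambda_i\lambda_j\vert < 1$ strictly (since $\vert\lambda_2\vert<1$); the denominator thus stays bounded away from $0$ while the numerator vanishes, and the entry tends to $0$. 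Hence $\Lambda_n \to \tfrac{1}{2}(K_p\otimes K_p)$, the diagonal matrix carrying $1/2$ at the slot indexed by $(1,1)$ and zeros elsewhere.

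It remains to pass to the limit in the product. Using $P_n\to P$, $P_n^{-1}\to P^{-1}$ and the continuity of the Kronecker product, one has $(P_n\otimes P_n) \to P\otimes P$ and $(P_n\otimes P_n)^{-1} = P_n^{-1}\otimes P_n^{-1} \to P^{-1}\otimes P^{-1}$; combined with $\Lambda_n \to \tfrac{1}{2}(K_p\otimes K_p)$ this yields
\[
\limn (1-\rho_n)\,B_n^{-1} = \tfrac{1}{2}(P\otimes P)(K_p\otimes K_p)(P^{-1}\otimes P^{-1}) = \tfrac{1}{2}(PK_pP^{-1})\otimes(PK_pP^{-1}),
\]
which is exactly $\tfrac{1}{2}(A^*\otimes A^*)$ by the definition $A^* = PK_pP^{-1}$. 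The only genuinely delicate point is the separation argument ensuring that every denominator except the $(1,1)$ one remains away from zero; once the reality of $\lambda_{n,\,1}$ and the strict bound $\vert\lambda_2\vert<1$ are secured, everything else follows from continuity and the mixed-product rule.
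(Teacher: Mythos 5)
Your proof is correct and takes essentially the same route as the paper: both diagonalize $A_n\otimes A_n$ through $P_n\otimes P_n$ and compute the entrywise limit of $(1-\rho_n)$ times the diagonal middle factor, the paper writing those entries as geometric series (via $B_n^{-1}=\sum_{k\geq 0}A_n^k\otimes A_n^k$) where you write them in the equivalent closed form $(1-\lambda_{n,\,i}\lambda_{n,\,j})^{-1}$. Your explicit argument that $\lambda_{n,\,1}=\pm\rho_n$ is real is a welcome touch, since the paper uses this fact without comment here (it appears in the proof of Lemma \ref{Lem_RowP1}).
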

\begin{proof}
Once again the result is obvious for $p=1$, so let $p > 1$. Because $\rho(A_n \otimes A_n) < 1$ (see the proof of Lemma \ref{Lem_InvB}),
\begin{equation*}
B_n^{-1} = \sum_{k=0}^{+\infty} A_n^k \otimes A_n^k.
\end{equation*}
For $n > n_0$, it follows from simple linear algebra that
\begin{equation}
\label{DiagBn1}
B_n^{-1} = (P_n \otimes P_n)\, \left[ \sum_{k=0}^{+\infty} D_n^k \otimes D_n^k \right] (P_n^{-1} \otimes P_n^{-1}).
\end{equation}
From Lem. 3.1 of \citet{Proia20}, we know that $P_n \rightarrow P$ where the limit matrix $P$ is invertible under \ref{HypCA}, as a Vandermonde matrix with distinct entries. Thus we also have $P_n^{-1} \rightarrow P^{-1}$. It remains to observe that $D_n^k \otimes D_n^k$ is a $p^2 \times p^2$ diagonal matrix with top-left element $(\pm \rho_n)^{\, 2k}$ and all other elements given either by $(\pm \rho_n)^{\, k}\, \lambda^k_{n,\, i}$, depending on whether $\lambda_1 = 1$ or $\lambda_1 = -1$, or by $\lambda^k_{n,\, i}\, \lambda^k_{n,\, j}$ for $2 \leq i, j \leq p$. Under \ref{HypSR},
\begin{equation}
\label{CvgSumR2}
\limn (1-\rho_n)\, \sum_{k=0}^{+\infty} (\pm \rho_n)^{\, 2k} = \frac{1}{2}
\end{equation}
and, for $2 \leq i, j \leq p$,
\begin{equation}
\label{CvgSumRL}
\limn \sum_{k=0}^{+\infty} (\pm \rho_n)^{\, k}\, \lambda^k_{n,\, i} = \frac{1}{1 \mp \lambda_i} \hsp \text{and} \hsp \limn \sum_{k=0}^{+\infty} \lambda^k_{n,\, i}\, \lambda^k_{n,\, j} = \frac{1}{1 - \lambda_i \lambda_j}.
\end{equation}
Under \ref{HypUR1}, we can conclude that
\begin{equation}
\label{CvgBn1}
\limn (1-\rho_n)\, B_n^{-1} = \frac{1}{2}\, (P \otimes P)\, K_{p^2}\, (P^{-1} \otimes P^{-1})
\end{equation}
where $K_{p^2}$ is given in \eqref{MatK}. In a more `elegant' way,
\begin{equation*}
\frac{1}{2}\, (P \otimes P)\, K_{p^2}\, (P^{-1} \otimes P^{-1}) = \frac{1}{2}\, (P\, K_p\, P^{-1}) \otimes (P\, K_p\, P^{-1})
\end{equation*}
using the fact that $K_{p^2} = K_p \otimes K_p$, which concludes the proof.
\end{proof}

\begin{rem}
\label{Rem_RatInvB}
If \ref{HypUR2} is true instead of \ref{HypUR1}, the lemma still holds and the reasoning is similar but $K_{p^2}$ in \eqref{CvgBn1} must be replaced by the matrix of size $p^2 \times p^2$ having the diagonal block structure
\begin{equation*}
K^*_{p^2} = \begin{pmatrix}
K_p & & \\
 & \bar{K}_p & & \\
 & & 0 &
\end{pmatrix} \hsp \text{with} \hsp \bar{K}_p = \diag(0, 1, 0, \ldots, 0).
\end{equation*}
\end{rem}

\begin{lem}
\label{Lem_InvLam}
Assume that \ref{HypCA} and \ref{HypUR1} hold, and that $p>1$. Then, the $(p-1) \times (p-1)$ bottom-right block of $H_0$, which is the symmetric matrix given by
\begin{equation}
\label{MatLam}
\Lambda = 
\begin{pmatrix}
\frac{\pi_{21}^2}{1 - \lambda_2^2} & \hdots & \frac{\pi_{21} \pi_{p1}}{1 - \lambda_2 \lambda_p} \\
\vdots & \ddots & \vdots \\
\frac{\pi_{p1} \pi_{21}}{1 - \lambda_p \lambda_2} & \hdots & \frac{\pi_{p1}^2}{1 - \lambda_p^2}
\end{pmatrix},
\end{equation}
is invertible. 
\end{lem}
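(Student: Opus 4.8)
The plan is to exhibit $\Lambda$ as a Cauchy-type matrix up to an invertible diagonal scaling and then compute its determinant in closed form. First I would factor out the $\pi_{k1}$'s. Setting $D = \diag(\pi_{21}, \ldots, \pi_{p1})$, the definition \eqref{MatLam} reads $\Lambda = D\, M\, D$, where $M$ is the $(p-1) \times (p-1)$ matrix with entries $M_{ij} = (1 - \lambda_i \lambda_j)^{-1}$ for $2 \leq i,j \leq p$. By Lemma \ref{Lem_ColP1} each $\pi_{k1}$ is non-zero, so $D$ is invertible and $\Lambda$ is invertible if and only if $M$ is. I would also record at this point that, under \ref{HypUR1}, one has $|\lambda_i| < 1$ for every $i \geq 2$, hence $|\lambda_i \lambda_j| < 1$ and $1 - \lambda_i \lambda_j \neq 0$, so the entries of $M$ are well-defined (and, consistently with the series interpretation, $M_{ij} = \sum_{k \geq 0} (\lambda_i \lambda_j)^k$).

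The matrix $M$ is of the Cauchy form $\big( \tfrac{1}{1 - x_i y_j} \big)$ with $x_i = y_i = \lambda_i$. I would then invoke (or quickly re-derive by the standard row/column reduction) the Cauchy-type determinant identity
\[
\det\Big[ \big( \tfrac{1}{1 - x_i y_j} \big)_{i,j} \Big] = \frac{\prod_{i<j} (x_i - x_j)(y_i - y_j)}{\prod_{i,j} (1 - x_i y_j)},
\]
which specializes here to
\[
\det M = \frac{\big( \prod_{2 \leq i < j \leq p} (\lambda_i - \lambda_j) \big)^2}{\prod_{2 \leq i,j \leq p} (1 - \lambda_i \lambda_j)}.
\]

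It then remains only to check that this ratio is non-zero. The denominator is a finite product of non-zero factors, by the observation above. The numerator is non-zero because, under \ref{HypCA}, the eigenvalues $\lambda_2, \ldots, \lambda_p$ are pairwise distinct, so each factor $\lambda_i - \lambda_j$ with $i \neq j$ is non-zero. Hence $\det M \neq 0$, and since $\det \Lambda = (\det D)^2 \det M$ with $\det D = \prod_{k=2}^p \pi_{k1} \neq 0$, the matrix $\Lambda$ is invertible.

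I expect the only genuinely delicate point to be the determinant identity itself: I would either cite a standard reference for the Cauchy determinant or verify it by the usual induction (subtract the last column from the others to extract the factors $\lambda_i - \lambda_p$ and the denominators $1 - \lambda_i \lambda_p$, reducing to a smaller Cauchy determinant). Everything else—invertibility of $D$, non-vanishing of $1 - \lambda_i \lambda_j$, and distinctness of the eigenvalues—follows immediately from the hypotheses and earlier lemmas. I note that when the $\lambda_i$ are all real one could instead observe that $\Lambda$ is the Gram matrix of the linearly independent $\ell^2$-vectors $\pi_{i1} (1, \lambda_i, \lambda_i^2, \ldots)$ and hence positive definite; but since \ref{HypUR1} permits complex conjugate pairs among $\lambda_2, \ldots, \lambda_p$, the determinant computation is the safer route, as it is insensitive to whether the eigenvalues are real or complex.
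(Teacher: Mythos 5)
Your proof is correct and follows essentially the same route as the paper: both arguments recognize $\Lambda$ as a (scaled) Cauchy-type matrix, compute its determinant in closed form, and conclude non-vanishing from the distinctness of the eigenvalues under \ref{HypCA}, the fact that $\vert \lambda_i \lambda_j \vert < 1$ under \ref{HypUR1}, and the non-vanishing of the $\pi_{k1}$ from Lemma \ref{Lem_ColP1}. The only cosmetic difference is that you factor out the diagonal scaling $D = \diag(\pi_{21}, \ldots, \pi_{p1})$ first and cite the classical Cauchy determinant identity, whereas the paper performs the equivalent column operations inline on the leading principal minors and solves a recurrence, arriving at the same closed-form determinant \eqref{DetLam}.
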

\begin{proof}
First under \ref{HypCA}, Lemma \ref{Lem_ColP1} implies $\pi_{k1} \neq 0$ for all $k$, so the result is obvious for $p=2$. Now let $p>2$ and denote by $\Lambda_k$ the $(k-1) \times (k-1)$ top-left submatrix of $\Lambda$ (so that $\Lambda_{p} = \Lambda$) and note that, although it is not a Cauchy matrix, it is closely related to it (see Sec. 0.9.12 of \citet{HornJohnson92}). Let us adopt the usual reasoning for calculating the determinant of such matrices. Multiplying the $(j-1)$-th column by $(1-\lambda_j \lambda_k) / (\pi_{j1}\pi_{k1})$ for $2 \leq j \leq k$, and subtracting the last column from each other column, it follows that
\begin{eqnarray*}
d_k & = & \prod_{j=2}^{k} \frac{\pi_{j1} \pi_{k1}}{(1 - \lambda_j \lambda_k)} \\
 & & \hsp \hsp \times \begin{vmatrix}
\frac{b_{2}(\lambda_2-\lambda_k)^2}{(1-\lambda_2^2)(1-\lambda_2 \lambda_k)} & \frac{b_{2}(\lambda_2-\lambda_k)(\lambda_3-\lambda_k)}{(1-\lambda_2\lambda_3)(1-\lambda_2\lambda_k)} 
& \hdots & 
\frac{b_{2}(\lambda_2-\lambda_k)(\lambda_{k-1}-\lambda_k)}{(1-\lambda_2 \lambda_{k-1}) (1-\lambda_2\lambda_k)} & * \\
\frac{b_{3}(\lambda_3-\lambda_k)(\lambda_2-\lambda_k)}{(1-\lambda_3\lambda_2)(1-\lambda_3\lambda_k)} &
\frac{b_{3}(\lambda_3-\lambda_k)^2}{(1-\lambda_3^2)(1-\lambda_3 \lambda_k)} & \hdots & 
\frac{b_{3}(\lambda_3-\lambda_k)(\lambda_{k-1} - \lambda_k)}{(1-\lambda_3 \lambda_{k-1})(1-\lambda_3 \lambda_k)} & * \\
\vdots & \vdots & \ddots & \vdots & \vdots \\
\frac{b_{k-1}(\lambda_{k-1} - \lambda_k)(\lambda_2-\lambda_k)}{(1-\lambda_{k-1}\lambda_2)(1-\lambda_{k-1} \lambda_k)} & 
\frac{b_{k-1}(\lambda_{k-1} - \lambda_k)(\lambda_3-\lambda_k)}{(1-\lambda_{k-1}\lambda_3)(1-\lambda_{k-1} \lambda_k)} & \hdots & 
\frac{b_{k-1}(\lambda_{k-1}-\lambda_k)^2}{(1-\lambda_{k-1}^2)(1-\lambda_{k-1} \lambda_k)} & * \\
0 & 0 & \hdots & 0 & 1
\end{vmatrix}
\end{eqnarray*}
where $d_k = \det(\Lambda_k)$, $b_i = (\pi_{i1}\pi_{j1}) / (\pi_{j1}\pi_{k1}) = \pi_{i1}/\pi_{k1}$ ($j$ stands for the column), and $*$ symbolizes useless entries for $d_k$. Thus, factorizing everything that can be factorized yields, for every $3 \leq k \leq p$,
\begin{equation*}
d_k = \frac{\pi_{k1}^2}{1-\lambda_k^2} \left(\prod_{j=2}^{k-1} \frac{ (\lambda_j - \lambda_k)^2}{(1 - \lambda_j \lambda_k)^2}\right) d_{k-1} \hsp \text{with} \hsp d_2 = \frac{\pi_{21}^2}{1-\lambda_2^2}.
\end{equation*}
The solution of the recurrence is given by
\begin{equation}
\label{DetLam}
\forall\, 2 \leq k \leq p, \hsp d_k = \left(\prod_{j=2}^{k} \frac{\pi_{j1}^2}{1-\lambda_j^2}\right) \left(\prod_{j = 2}^k \prod_{i\, <\, j} \frac{(\lambda_i - \lambda_j)^2}{(1 - \lambda_i \lambda_j)^2}\right).
\end{equation}
If \ref{HypCA} holds, then all eigenvalues are distinct and since $\pi_{j1}$ is non-zero for all $j$, $d_k$ is non-zero for all $k$. Taking $k = p$, the proof is now complete. If $A$ has only real eigenvalues, we can even show that, under \ref{HypCA}, $d_k>0$ for all $k$ which implies the positive definiteness of $\Lambda$ using Sylvester's criterion, see \textit{e.g.} Thm. 7.2.5 of \citet{HornJohnson92}.
\end{proof}
\begin{rem}
\label{Rem_InvLam}
Under \ref{HypUR2} and for $p>2$, we only have to consider the $(p-2) \times (p-2)$ bottom-right submatrix of $\Lambda$, which is obviously invertible (and positive definite when $A$ has only real eigenvalues), based on the above. 
\end{rem}

\subsection{Proof of Propositions \ref{Prop_Mem} and \ref{Prop_Cov}}
\label{Sec_ProofCov}

Let us start by noting that, under Lemma \ref{Lem_Diag} and \ref{HypSR}, for a fixed $n > n_0$,
\begin{eqnarray*}
\Vert \Gamma_n(0) \Vert & \leq & \cst \sum_{\ell\, \geq\, 0} \Vert A_n^\ell\, K_p\, (A_n\T)^\ell \Vert \\
 & \leq & \cst \sum_{\ell\, \geq\, 0} \Vert D_n^\ell \Vert^2\, \leq\, \frac{\cst}{1 - \rho_n^2}\, <\, +\infty
\end{eqnarray*}
where $\Gamma_n(\cdot)$ is the autocovariance function given in \eqref{ACV}. Thus, by the same reasoning,
\begin{eqnarray*}
\left\Vert \sum_{h\, \in\, \dZ} \Gamma_n(h) \right\Vert\, \leq\, \sum_{h\, \in\, \dZ} \Vert \Gamma_n(h) \Vert & \leq & \cst\, \Vert \Gamma_n(0) \Vert \sum_{h\, \in\, \dZ} \Vert A_n^{\vert h \vert} \Vert \\
 & \leq & \frac{\cst}{1 - \rho_n^2} \sum_{h\, \in\, \dZ} \Vert D_n^{\vert h \vert} \Vert\, \leq\, \frac{\cst}{(1-\rho_n)^2}\, <\, +\infty.
\end{eqnarray*}
Moreover, as we will see later in \eqref{CvgGam}, there exists a positive semi-definite matrix $\Gamma$ such that $(1 - \rho_n) \Gamma_n(0) \rightarrow \Gamma$. Hence, since $\Gamma_n(-h) = \Gamma_n\T(h)$,
\begin{equation}
\label{CvgSumP}
(1 - \rho_n)^2 \sum_{h\, \in\, \dZ} \Gamma_n(h) ~\longrightarrow~ A^* \Gamma + (A^* \Gamma)\T \eqdef \Gamma^+
\end{equation}
under \ref{HypUR1} with $\lambda_1 = 1$ or \ref{HypUR2}, where $A^* = P\, K_p\, P^{-1}$ and $K_p$ is given in \eqref{MatK}. The top-left element of $\Gamma^+$ is $\sigma^2 \pi_{11}^2 \neq 0$ (take \textit{e.g.} Lemma \ref{Lem_ColP1} with $k=1$). Under \ref{HypUR1} with $\lambda_1 = -1$, by the same calculations, it turns out that
\begin{equation}
\label{CvgSumM}
\sum_{h\, \in\, \dZ} \Gamma_n(h) ~\longrightarrow~ \Gamma^-
\end{equation}
where $\Gamma^-$ is also a non-zero matrix. The result is established by combining \eqref{CvgSumP} and \eqref{CvgSumM}.
\qed

\smallskip

As for the proof of Proposition \ref{Prop_Cov}, it will result from a chain of intermediate lemmas. See footnote \ref{FN} on page \pageref{FN} for notation.

\begin{lem}[Variance decomposition]
\label{Lem_DecompVar}
Assume that \ref{HypSR} holds. Then, for all $n \geq 1$,
\begin{eqnarray}
\label{DecompVar}
\vec\!\left( \frac{S_{n,\, n-1}}{n} \right) & = & B_n^{-1}\, \vec\!\left( \frac{T_n}{n} \right) + B_n^{-1}\, (I_p \otimes A_n)\, \vec\!\left( \frac{Z_n}{n} \right) \nonumber \\
 & & \hsp \hsp +~ B_n^{-1}\, (A_n \otimes I_p)\, \vec\!\left( \frac{Z_n\T}{n} \right) + B_n^{-1}\, \vec\!\left( \frac{L_n}{n} \right)
\end{eqnarray}
where $S_{n,\, n-1}$ is given in \eqref{Intro_OLSMulti}, $B_n$ is given in \eqref{MatB},
\begin{equation}
\label{DefTZL}
\hsp Z_n = \sum_{k=1}^{n} \Phi_{n,\, k-1}\, E_k\T, \hsp L_n = \sum_{k=1}^{n} E_k\, E_k\T
\end{equation}
and $T_n = \Phi_{n,\, 0}\, \Phi_{n,\, 0}\T - \Phi_{n,\, n}\, \Phi_{n,\, n}\T$.
\end{lem}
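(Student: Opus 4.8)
The plan is to turn the first-order vector recursion \eqref{Intro_ModVAR} into a discrete Lyapunov (Stein) equation for $S_{n,\, n-1}$ and then to vectorize it. First I would form the rank-one outer products $\Phi_{n,\, k}\,\Phi_{n,\, k}\T$ from the model $\Phi_{n,\, k} = A_n\,\Phi_{n,\, k-1} + E_k$, which gives
\begin{equation*}
\Phi_{n,\, k}\,\Phi_{n,\, k}\T = A_n\,\Phi_{n,\, k-1}\,\Phi_{n,\, k-1}\T A_n\T + A_n\,\Phi_{n,\, k-1}\,E_k\T + E_k\,\Phi_{n,\, k-1}\T A_n\T + E_k\,E_k\T.
\end{equation*}
Summing over $1 \leq k \leq n$ and recognizing $\sum_{k=1}^n \Phi_{n,\, k-1}\Phi_{n,\, k-1}\T = S_{n,\, n-1}$, $\sum_{k=1}^n \Phi_{n,\, k-1}E_k\T = Z_n$, and $\sum_{k=1}^n E_k E_k\T = L_n$ from \eqref{DefTZL}, the four terms on the right become $A_n S_{n,\, n-1} A_n\T$, $A_n Z_n$, $Z_n\T A_n\T$ and $L_n$. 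The left-hand side, after a telescoping of indices, equals $\sum_{k=1}^n \Phi_{n,\, k}\Phi_{n,\, k}\T = S_{n,\, n-1} - \Phi_{n,\, 0}\Phi_{n,\, 0}\T + \Phi_{n,\, n}\Phi_{n,\, n}\T = S_{n,\, n-1} - T_n$ with $T_n$ as defined in the statement.

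Rearranging isolates the Stein form
\begin{equation*}
S_{n,\, n-1} - A_n\,S_{n,\, n-1}\,A_n\T = T_n + A_n Z_n + Z_n\T A_n\T + L_n.
\end{equation*}
The second step is to apply $\vec(\cdot)$ to both sides using the standard identity $\vec(M X N) = (N\T \otimes M)\vec(X)$. This turns $A_n S_{n,\, n-1} A_n\T$ into $(A_n \otimes A_n)\vec(S_{n,\, n-1})$, $A_n Z_n$ into $(I_p \otimes A_n)\vec(Z_n)$, and $Z_n\T A_n\T$ into $(A_n \otimes I_p)\vec(Z_n\T)$, so the equation reads $B_n\,\vec(S_{n,\, n-1}) = \vec(T_n) + (I_p \otimes A_n)\vec(Z_n) + (A_n \otimes I_p)\vec(Z_n\T) + \vec(L_n)$ with $B_n = I_{p^2} - A_n \otimes A_n$ from \eqref{MatB}. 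Since \ref{HypSR} ensures $\rho(A_n) < 1$, Lemma \ref{Lem_InvB} guarantees $B_n$ is invertible; left-multiplying by $B_n^{-1}$ and dividing through by $n$ yields exactly \eqref{DecompVar}.

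There is no deep obstacle here, as the result is an algebraic identity valid for every $n \geq 1$ with no probabilistic content, but the step that demands care is the bookkeeping of transposes in the vectorization identities, and in particular keeping $\vec(Z_n\T)$ (rather than $\vec(Z_n)$) attached to the factor $(A_n \otimes I_p)$. One should also double-check the index shift in the telescoping sum, since the boundary terms $\Phi_{n,\, 0}\Phi_{n,\, 0}\T$ and $\Phi_{n,\, n}\Phi_{n,\, n}\T$ are precisely what produce the correction $T_n$, whose control at the boundary of the sum is what makes this decomposition useful in the subsequent propositions.
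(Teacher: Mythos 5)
Your proof is correct and follows essentially the same route as the paper's: form the outer-product recursion from \eqref{Intro_ModVAR}, sum and telescope to get the Stein equation $S_{n,\, n-1} = T_n + A_n S_{n,\, n-1} A_n\T + A_n Z_n + Z_n\T A_n\T + L_n$, vectorize with $\vec(MXN) = (N\T \otimes M)\vec(X)$, and invert $B_n$ \textit{via} Lemma \ref{Lem_InvB}. The only cosmetic difference is that the paper additionally cites a generalized Sylvester equation result for uniqueness of the solution, which is equivalent to the invertibility of $B_n$ that you invoke directly.
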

\begin{proof}
By direct calculation, we first obtain that, for all $n \geq 1$,
\begin{equation*}
\Phi_{n,\, k}\, \Phi_{n,\, k}\T = A_n\, \Phi_{n,\, k-1}\, \Phi_{n,\, k-1}\T\, A_n\T + A_n\, \Phi_{n,\, k-1}\, E_k\T + E_k\, \Phi_{n,\, k-1}\T\, A_n\T + E_k\, E_k\T.
\end{equation*}
Then, summing over $k$, it is not hard to see that
\begin{equation*}
S_{n,\, n-1} = T_n + A_n\, S_{n,\, n-1}\, A_n\T + A_n\, Z_n + Z_n\T A_n\T + L_n.
\end{equation*}
This is a generalized Sylvester equation w.r.t. $S_{n,\, n-1}$ having a unique solution since $\rho_n < 1$, see \textit{e.g.} Lem. 2.1(2) of \citet{JiangWei03}. We easily deduce that
\begin{equation*}
B_n\, \vec(S_{n,\, n-1}) = \vec(T_n) + (I_p \otimes A_n)\, \vec(Z_n) + (A_n \otimes I_p)\, \vec(Z_n\T) + \vec(L_n)
\end{equation*}
which, \textit{via} Lemma \ref{Lem_InvB}, gives the result.
\end{proof}

\begin{lem}[Isolated terms]
\label{Lem_IsolTerm}
Assume that \ref{HypCA} and \ref{HypSR} hold. Then,
\begin{equation*}
\frac{T_n}{n} \cvgp 0
\end{equation*}
where $T_n$ is given in \eqref{DefTZL}.
\end{lem}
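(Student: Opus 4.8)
The plan is to treat the two rank-one terms in $T_n = \Phi_{n,\, 0}\, \Phi_{n,\, 0}\T - \Phi_{n,\, n}\, \Phi_{n,\, n}\T$ separately and to establish convergence in $L^1$, which then yields convergence in probability by Markov's inequality. Using that $\Vert v v\T \Vert = \Vert v \Vert^2$ for any vector $v$, the triangle inequality gives
\[
\dE\!\left[ \left\Vert \frac{T_n}{n} \right\Vert \right] \leq \frac{1}{n}\, \dE\big[ \Vert \Phi_{n,\, 0} \Vert^2 \big] + \frac{1}{n}\, \dE\big[ \Vert \Phi_{n,\, n} \Vert^2 \big],
\]
so it suffices to show that each of these two expectations is $o(n)$.

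The first term is the easy one: the initial value has a finite second moment (and at worst $\dE[\Vert \Phi_{n,\, 0} \Vert^2] = \tr \Gamma_n(0) = O(1/(1-\rho_n))$ if one adopts the stationary initialization), which is $o(n)$ under \ref{HypSR} by the argument below. For the second term I would use the explicit recursion $\Phi_{n,\, n} = A_n^n\, \Phi_{n,\, 0} + \sum_{\ell=0}^{n-1} A_n^\ell\, E_{n-\ell}$ together with the independence of $\Phi_{n,\, 0}$ and $(E_k)$, which yields
\[
\dE\big[ \Vert \Phi_{n,\, n} \Vert^2 \big] \leq \cst\, \Vert A_n^n \Vert^2\, \dE\big[ \Vert \Phi_{n,\, 0} \Vert^2 \big] + \sigma^2 \sum_{\ell=0}^{n-1} \big\Vert A_n^\ell\, K_p\, (A_n\T)^\ell \big\Vert.
\]
The diagonalization of Lemma \ref{Lem_Diag} gives $\Vert A_n^\ell \Vert \leq \cst\, \rho_n^\ell$, so $\Vert A_n^n \Vert^2 \leq \cst$ stays bounded, while the sum on the right is precisely the quantity already bounded by $\cst/(1-\rho_n^2) \leq \cst/(1-\rho_n)$ in the estimate for $\Vert \Gamma_n(0) \Vert$ at the beginning of Section \ref{Sec_ProofCov}. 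Hence $n^{-1} \dE[\Vert \Phi_{n,\, n} \Vert^2] \leq \cst/n + \cst/(n(1-\rho_n))$.

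The crux of the argument — and the only place where \ref{HypSR} is truly needed — is the second summand: since $1-\rho_n = c/v_n$ with $v_n \ll n$, we have $1/(n(1-\rho_n)) = v_n/(c\, n) \to 0$. The point to emphasize is that the stationary variance of $\Phi_{n,\, n}$ \emph{diverges} at the rate $1/(1-\rho_n) \asymp v_n$, reflecting the near-instability of the process; what saves the lemma is that this divergence is strictly slower than $n$, so the normalization by $n$ is enough to annihilate the term. Combining the two bounds gives $\dE[\Vert T_n/n \Vert] \to 0$, and therefore $T_n/n \cvgp 0$, as claimed.
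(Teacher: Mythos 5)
Your proof is correct and follows essentially the same route as the paper's: both rest on the causal recursion $\Phi_{n,\, n} = A_n^n\, \Phi_{n,\, 0} + \sum_{\ell=0}^{n-1} A_n^\ell\, E_{n-\ell}$, the geometric bound on powers of $A_n$ coming from the diagonalization of Lemma \ref{Lem_Diag}, and the observation that the resulting geometric sum is $O(1/(1-\rho_n)) = O(v_n)$ with $v_n \ll n$. The only cosmetic differences are that you run the whole argument in $L^1$ (using independence to kill the cross term) rather than the paper's term-by-term Markov bounds, and that you only invoke boundedness of $\Vert A_n^n \Vert$ where the paper additionally notes $\rho_n^{\, 2n} \rightarrow 0$.
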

\begin{proof}
First, since $\Phi_{n,\, 0}$ has a finite moment of order 2, Markov's inequality directly gives
\begin{equation}
\label{IsolTerm_CvgT0}
\frac{\Vert \Phi_{n,\, 0}\, \Phi_{n,\, 0}\T \Vert}{n} \cvgp 0.
\end{equation}
Then, by the triangle inequality and exploiting Lemma \ref{Lem_Diag}, for all $n > n_0$,
\begin{eqnarray*}
\Vert \Phi_{n,\, n}\, \Phi_{n,\, n}\T \Vert & \leq & 2\, \Vert A_n^n \Vert^2\, \Vert \Phi_{n,\, 0} \Vert^2 + 2\, \left\Vert \sum_{k=0}^{n-1} A_n^k\, E_{n-k} \right\Vert^2 \nonumber \\
 & \leq & \cst\, \rho_n^{\, 2n}\, \Vert \Phi_{n,\, 0} \Vert^2 + 2\, \left\Vert \sum_{k=0}^{n-1} A_n^k\, E_{n-k} \right\Vert^2 ~ \eqdef ~ T_{1,\, n} + T_{2,\, n}.
\end{eqnarray*}
Note that, according to \ref{HypSR},
\begin{eqnarray*}
\rho_n^{\, 2n} & = & \left( 1 - \frac{c}{v_n} \right)^{2n} \nonumber \\
 & = & \exp\left( \frac{-2c\, n}{v_n} + o\left( \frac{n}{v_n} \right) \right) ~ \longrightarrow ~ 0.
\end{eqnarray*}
We can conclude that $\dE[\vert T_{1,\, n} \vert] \rightarrow 0$, and thus
\begin{equation}
\label{IsolTerm_CvgT1}
\frac{T_{1,\, n}}{n} \cvgp 0.
\end{equation}
Moreover,
\begin{eqnarray*}
\left\Vert \sum_{k=0}^{n-1} A_n^k\, E_{n-k} \right\Vert^2 & = & \sum_{k=0}^{n-1} \sum_{\ell=0}^{n-1} \llangle P_n\, D_n^k\, P_n^{-1}\, E_{n-k},\, P_n\, D_n^\ell\, P_n^{-1}\, E_{n-\ell} \rrangle \nonumber \\
 & = & \sum_{k=0}^{n-1} \sum_{\ell=0}^{n-1} \tr\big( P_n\, D_n^\ell\, P_n^{-1}\, E_{n-\ell}\, E_{n-k}\T\, (P_n^{-1})\T\, D_n^k\, P_n\T \big).
\end{eqnarray*}
Consequently,
\begin{eqnarray*}
\dE[\vert T_{2,\, n} \vert] & = & 2\, \sigma^2\, \sum_{k=0}^{n-1} \tr\big( P_n\, D_n^k\, P_n^{-1}\, K_p\, (P_n^{-1})\T\, D_n^k\, P_n\T \big) \nonumber \\
 & = & \cst\, \sum_{k=0}^{n-1} \Vert P_n\, D_n^k\, P_n^{-1}\, K_p \Vert^2
\end{eqnarray*}
where $K_p$ is given in \eqref{MatK}, and obviously satisfies $K_p = K_p\T$ and $K_p^{\, 2} = K_p$. Hence, using the same arguments as before,
\begin{eqnarray}
\label{IsolTerm_EstT2}
\dE[\vert T_{2,\, n} \vert] & \leq & \cst\, \sum_{k=0}^{n-1} \rho_n^{\, 2k} \nonumber \\
 & = & \cst\, v_n\, \frac{1 - \exp\left( \frac{-2c\, n}{v_n} + o\left( \frac{n}{v_n} \right) \right)}{2c - \frac{c^2}{v_n}} ~ = ~ O(v_n).
\end{eqnarray}
Since $v_n = o(n)$, that leads to
\begin{equation}
\label{IsolTerm_CvgT2}
\frac{T_{2,\, n}}{n} \cvgp 0.
\end{equation}
The combination of \eqref{IsolTerm_CvgT0}, \eqref{IsolTerm_CvgT1} and \eqref{IsolTerm_CvgT2} concludes the proof.
\end{proof}

\begin{lem}[Martingale terms]
\label{Lem_MartTerm}
Assume that \ref{HypCA} and \ref{HypSR} hold. Then,
\begin{equation*}
\frac{Z_n}{n} \cvgp 0
\end{equation*}
where $Z_n$ is given in \eqref{DefTZL}.
\end{lem}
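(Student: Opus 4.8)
The plan is to reduce this matrix convergence to a one-dimensional martingale computation. Since $E_k = \veps_k\, e_p$ with $e_p = (1,0,\ldots,0)\T$, the matrix $Z_n = \sum_{k=1}^n \Phi_{n,\, k-1}\, E_k\T$ has all its columns equal to zero except the first, which is $M_n \eqdef \sum_{k=1}^n \veps_k\, \Phi_{n,\, k-1}$. Consequently $\Vert Z_n \Vert = \Vert M_n \Vert$, and it suffices to prove that $M_n / n \cvgp 0$ in $\dR^p$. With respect to the natural filtration $\cF_k = \sigma(\Phi_{n,\, 0}, \veps_1, \ldots, \veps_k)$, each summand $\veps_k\, \Phi_{n,\, k-1}$ is a martingale increment, because $\Phi_{n,\, k-1}$ is $\cF_{k-1}$-measurable while $\veps_k$ is centered and independent of $\cF_{k-1}$; hence $(M_n)$ is a vector-valued martingale.

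I would then control $M_n$ in $L^2$. By orthogonality of the increments (for $k \neq \ell$ the factor with the larger index is independent of everything else in the cross term and has zero mean), one gets
\begin{equation*}
\dE\big[ \Vert M_n \Vert^2 \big] = \sigma^2 \sum_{k=1}^n \dE\big[ \Vert \Phi_{n,\, k-1} \Vert^2 \big].
\end{equation*}
The whole difficulty is thus concentrated in a bound on the second moment of $\Phi_{n,\, k-1}$ that is uniform in $k$.

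To obtain it, I would split $\Phi_{n,\, k-1} = A_n^{k-1}\, \Phi_{n,\, 0} + \sum_{j=0}^{k-2} A_n^j\, E_{k-1-j}$ and estimate the two pieces exactly as in the proof of Lemma \ref{Lem_IsolTerm}. The deterministic part contributes at most $\cst\, \rho_n^{\, 2(k-1)}\, \dE[\Vert \Phi_{n,\, 0} \Vert^2] \leq \cst$, while the stochastic part, after expanding the square and killing the cross terms, reduces to $\sigma^2 \sum_{j=0}^{k-2} \Vert A_n^j\, K_p \Vert^2 \leq \cst \sum_{j\, \geq\, 0} \rho_n^{\, 2j} \leq \cst / (1 - \rho_n^{\, 2})$, using Lemma \ref{Lem_Diag} to absorb $\Vert P_n \Vert$ and $\Vert P_n^{-1} \Vert$ into the constant. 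Both bounds being uniform in $k$, we reach $\dE[\Vert \Phi_{n,\, k-1} \Vert^2] \leq \cst / (1 - \rho_n) \asymp v_n$ under \ref{HypSR}.

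Putting the pieces together yields $\dE[\Vert M_n \Vert^2] \leq \cst\, n\, v_n$, so that
\begin{equation*}
\dE\!\left[ \left\Vert \frac{Z_n}{n} \right\Vert^2 \right] = \frac{\dE[\Vert M_n \Vert^2]}{n^2} \leq \cst\, \frac{v_n}{n} ~\longrightarrow~ 0
\end{equation*}
since $v_n \ll n$ by \ref{HypSR}. Convergence in $L^2$ (equivalently Markov's inequality) then gives $Z_n / n \cvgp 0$. The only genuinely delicate point is the uniform second-moment estimate above; everything else is orthogonality and a geometric series. This is also precisely where near-instability makes itself felt: the per-term bound degrades like $v_n \to +\infty$, but the extra factor $1/n$ coming from the normalization still wins because $v_n = o(n)$.
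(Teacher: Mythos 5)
Your proof is correct and follows essentially the same route as the paper's: an $L^2$ computation exploiting the orthogonality of the noise terms together with the geometric bound $\Vert A_n^k \Vert \leq \cst\, \rho_n^{\, k}$ from Lemma \ref{Lem_Diag}, yielding $\dE[\Vert Z_n \Vert^2] = O(n\, v_n)$ and concluding via $v_n = o(n)$ and Markov's inequality. The only difference is organizational: the paper first splits $Z_n$ by the triangle inequality into initial-value terms and a pure-noise double sum and bounds each piece in $L^2$, whereas you apply martingale orthogonality to the whole sum first (after the neat observation that $\Vert Z_n \Vert$ equals the norm of its single non-zero column) and then bound $\dE[\Vert \Phi_{n,\, k-1} \Vert^2] = O(v_n)$ uniformly in $k$ --- the same estimates performed in the reverse order.
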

\begin{proof}
By the triangle inequality,
\begin{eqnarray*}
\Vert Z_n \Vert & \leq & \Vert \Phi_{n,\, 0}\, E_1\T \Vert + \left\Vert \sum_{k=2}^n A_n^{k-1}\, \Phi_{n,\, 0}\, E_k\T \right\Vert + \left\Vert \sum_{k=2}^n \sum_{\ell=0}^{k-2} A_n^{\ell}\, E_{k-1-\ell}\, E_k\T \right\Vert \nonumber \\
 & \eqdef & T_{0,\, n} + T_{1,\, n} + T_{2,\, n}.
\end{eqnarray*}
Clearly, by independence and existence of moments of order 2,
\begin{equation}
\label{MartTerm_CvgT0}
\frac{T_{0,\, n}}{n} \cvgp 0.
\end{equation}
Then, a direct calculation shows that
\begin{eqnarray*}
T_{1,\, n}^{\, 2} & = & \sum_{k=2}^n \sum_{\ell=2}^{n} \llangle A_n^{k-1}\, \Phi_{n,\, 0}\, E_k\T,\, A_n^{\ell-1}\, \Phi_{n,\, 0}\, E_\ell\T \rrangle \nonumber \\
 & = & \sum_{k=2}^n \sum_{\ell=2}^{n} \tr\big(\Phi_{n,\, 0}\T\, (A_n\T)^{\ell-1}\, A_n^{k-1}\, \Phi_{n,\, 0}\, E_k\T E_\ell \big)
\end{eqnarray*}
exploiting the cyclic property of the trace. Hence, for any $n > n_0$,
\begin{eqnarray*}
\dE[T_{1,\, n}^{\, 2}] & = & \sum_{k=2}^n \sum_{\ell=2}^{n} \tr\big( \dE\big[ \Phi_{n,\, 0}\T\, (A_n\T)^{\ell-1}\, A_n^{k-1}\, \Phi_{n,\, 0}  \big]\, \dE[ E_k\T E_\ell ] \big) \nonumber \\
 & = & \sigma^2\, \sum_{k=2}^n \dE\big[ \tr\big( \Phi_{n,\, 0}\T\, (A_n^{k-1})\T\, A_n^{k-1}\, \Phi_{n,\, 0} \big) \big] \nonumber \\
 & = & \sigma^2\, \sum_{k=2}^n \dE\big[ \Vert A_n^{k-1}\, \Phi_{n,\, 0} \Vert^2 \big] \nonumber \\
 & \leq & \cst\, \sum_{k=2}^n \rho_n^{\, 2(k-1)}\, \dE[ \Vert \Phi_{n,\, 0} \Vert^2 ] ~ = ~ O(v_n)
\end{eqnarray*}
using \eqref{IsolTerm_EstT2}, Lemma \ref{Lem_Diag} and the second-order moments of $\Phi_{n,\, 0}$. Since $v_n = o(n)$,
\begin{equation}
\label{MartTerm_CvgT1}
\frac{T_{1,\, n}^{\, 2}}{n} \cvgp 0 \hsp \text{so that} \hsp \frac{T_{1,\, n}}{n} \cvgp 0.
\end{equation}
To handle the last term of the decomposition, let, for any $k \geq 1$,
\begin{equation*}
U_k = \sum_{j=1}^k A_n^{k-j}\, E_j
\end{equation*}
and note that
\begin{equation}
\label{MartTerm_EspUkUl}
\forall\, k \neq \ell, \hsp \dE\big[ U_{\ell-1}\T\, U_{k-1}\, E_k\T E_\ell \big] = 0.
\end{equation}
Now,
\begin{eqnarray*}
T_{2,\, n}^{\, 2} & = & \left\Vert \sum_{k=2}^n U_{k-1}\, E_k\T \right\Vert^2 \nonumber \\
 & = & \sum_{k=2}^n \sum_{\ell=2}^n \llangle U_{k-1}\, E_k\T,\, U_{\ell-1}\, E_\ell\T \rrangle ~ = ~ \sum_{k=2}^n \sum_{\ell=2}^n \tr\big( U_{\ell-1}\T\, U_{k-1}\, E_k\T E_\ell \big)
\end{eqnarray*}
exploiting again the cyclic property of the trace. Thanks to relations \eqref{MartTerm_EspUkUl}, it follows that, for $n > n_0$,
\begin{eqnarray*}
\dE[T_{2,\, n}^{\, 2}] & = & \sigma^2\, \sum_{k=2}^n \tr\big( \dE\big[ U_{k-1}\T\, U_{k-1} \big] \big) \nonumber \\
 & = & \sigma^2\, \sum_{k=2}^n \sum_{j=1}^{k-1} \sum_{i=1}^{k-1} \tr\big( (A_n\T)^{k-1-i}\, A_n^{k-1-j}\, \dE[E_j\, E_i\T] \big) \nonumber \\
 & = & \sigma^4\, \sum_{k=2}^n \sum_{j=1}^{k-1} \tr\big( (A_n\T)^{k-1-j}\, A_n^{k-1-j}\, K_p \big) = \sigma^4\, \sum_{k=2}^n \sum_{\ell=0}^{k-2} \Vert A_n^\ell\, K_p \Vert^2 ~ = ~ O(n v_n)
\end{eqnarray*}
where $K_p$ is given in \eqref{MatK}, and where the rate $v_n$ is obtained \textit{via} the same lines as \eqref{IsolTerm_EstT2}. Consequently,
\begin{equation}
\label{MartTerm_CvgT2}
\frac{T_{2,\, n}^{\, 2}}{n^2} \cvgp 0 \hsp \text{so that} \hsp \frac{T_{2,\, n}}{n} \cvgp 0.
\end{equation}
The combination of \eqref{MartTerm_CvgT0}, \eqref{MartTerm_CvgT1} and \eqref{MartTerm_CvgT2} concludes the proof.
\end{proof}
Let us now return to the proof of Proposition \ref{Prop_Cov}. By the law of large numbers, the last term of \eqref{DecompVar} is such that
\begin{equation}
\label{CvgL}
\frac{L_n}{n} \cvgp \sigma^2\, K_p
\end{equation}
where $K_p$ is given in \eqref{MatK}. By combining Lemmas \ref{Lem_RatInvB}, \ref{Lem_DecompVar}, \ref{Lem_IsolTerm} and \ref{Lem_MartTerm}, convergence \eqref{CvgL} and hypotheses \ref{HypCA}, \ref{HypUR1} and \ref{HypSR}, the first result is proved. It is important to note that $A^* = P\, K_p\, P^{-1}$ is a real matrix (see Lemma \ref{Lem_RowP1}), so that the limit of the renormalized empirical covariance is obviously real itself. For the second result, we have
\begin{eqnarray}
\label{CvgGam}
(1-\rho_n)\, \vec(\Gamma_n) & = & \sigma^2\, (1-\rho_n) \left[ \sum_{k=0}^{+\infty} A_n^k \otimes A_n^k \right] \vec(K_p) \nonumber \\
& = & \sigma^2 \left( (1-\rho_n)\, B_n^{-1} \right) e_{p^2} \nonumber \\
& \longrightarrow & \frac{\sigma^2}{2}\, (A^* \otimes A^*) e_{p^2} ~=~ \vec(\Gamma)
\end{eqnarray}
from Lemma \ref{Lem_RatInvB}, where $\Gamma$ is defined in Proposition \ref{Prop_Cov} and noting that $\vec(K_p) = e_{p^2}$. Finally,
\begin{equation*}
(1 - \rho_n) \left\Vert \frac{S_n}{n} - \Gamma_n \right\Vert\, \leq\, \left\Vert (1 - \rho_n)\, \frac{S_n}{n} - \Gamma \right\Vert + \big\Vert (1 - \rho_n)\, \Gamma_n - \Gamma \big\Vert \cvgp 0
\end{equation*}
exploiting the first part of the proposition and \eqref{CvgGam}. The proof is now complete. \qed

\begin{rem}
\label{Rem_ProofProp}
If \ref{HypUR2} is true instead of \ref{HypUR1}, the results of the proposition still hold provided that the limit is adjusted (see Remark \ref{Rem_RatInvB}).
\end{rem}

\subsection{Proof of Theorem \ref{Thm_Consist}}
\label{Sec_ProofConsist}

Note that for all $n > n_0$, under \ref{HypCA} and like in \eqref{DiagBn1},
\begin{equation*}
W_n\, B_n^{-1} = (V_n^{\, 1/2} \otimes V_n^{\, 1/2})\, \left[ \sum_{k=0}^{+\infty} D_n^k \otimes D_n^k \right] (P_n^{-1} \otimes P_n^{-1})
\end{equation*}
where $W_n$ comes from \eqref{MatW}. The matrix of rates is structured in $p$ blocks as follows,
\begin{equation*}
V_n^{\, 1/2} \otimes V_n^{\, 1/2} = \diag\big( \sqrt{1 - \rho_n}\, V_n^{\, 1/2},\, V_n^{\, 1/2},\, \ldots,\, V_n^{\, 1/2} \big).
\end{equation*}
Thus, taking over the reasoning of \eqref{CvgSumR2} and \eqref{CvgSumRL}, we can show that
\begin{equation}
\label{LimWB}
\limn W_n\, B_n^{-1} = \diag\left( \frac{K_p}{2},\, \Delta_2,\, \ldots,\, \Delta_p \right) (P^{-1} \otimes P^{-1}) \eqdef H^*
\end{equation}
using \ref{HypUR1} and \ref{HypSR}, where $K_p$ is given in \eqref{MatK} and
\begin{equation*}
\forall\, 2 \leq i \leq p, \hsp \Delta_i = \diag\left( 0,\, \frac{1}{1-\lambda_i\, \lambda_2},\, \ldots,\, \frac{1}{1-\lambda_i\, \lambda_p}  \right).
\end{equation*}
That leads, together with Lemmas \ref{Lem_DecompVar}, \ref{Lem_IsolTerm}, \ref{Lem_MartTerm} and formula \eqref{CvgL}, to the convergence
\begin{equation*}
W_n\, \vec\!\left( \frac{S_{n,\, n-1}}{n} \right) \cvgp \sigma^2 H^* e_{p^2}.
\end{equation*}
Since
\begin{equation*}
W_n\, \vec\!\left( \frac{S_{n,\, n-1}}{n} \right) = \vec\!\left( V_n^{\, 1/2}\, P_n^{-1}\, \frac{S_{n,\, n-1}}{n}\, (P_n^{-1})\T\, V_n^{\, 1/2} \right),
\end{equation*}
we obtain that
\begin{equation}
\label{CvgSnRenorm}
V_n^{\, 1/2}\, P_n^{-1}\, \frac{S_{n,\, n-1}}{n}\, (P_n^{-1})\T\, V_n^{\, 1/2} \cvgp \vec^{-1}(\sigma^2 H^* e_{p^2})
\end{equation}
where $\vec^{-1} : \dC^{p^2} \rightarrow \dC^{p \times p}$ is the vectorization inverse operator. A straightforward calculation shows that $\vec^{-1}(\sigma^2 H^* e_{p^2}) = H$ where $H$ is given in \eqref{MatH}, and this limit is invertible. Indeed, on the one hand,
\begin{equation*}
\pi_{11} = \frac{(-\lambda_1)^{p-1}}{\prod_{j=2}^p (\lambda_j - \lambda_1)}
\end{equation*}
from formula \eqref{pik1} with $k=1$, and we know from Lemma \ref{Lem_RowP1} that $\pi_{11}$ is real and non-zero (positive). On the other hand, Lemma \ref{Lem_InvLam} implies that $\Lambda$ is invertible so that $\det(H)$ is obviously non-zero. In addition, the estimation error satisfies
\begin{eqnarray}
\label{EstErr}
\whn - \theta_n & = & (P_n\T)^{-1}\, V_n^{\, 1/2}\, \left( V_n^{\, 1/2}\, P_n^{-1}\, \frac{S_{n,\, n-1}}{n}\, (P_n^{-1})\T\, V_n^{\, 1/2} \right)^{-1} \nonumber \\
 & & \hsp \hsp \times \left( V_n^{\, 1/2}\, P_n^{-1} \frac{Z_n}{n}\, e_{p} \right)
\end{eqnarray}
where in this expression, $Z_n$ comes from \eqref{DefTZL}. By the reasoning above and Lemmas \ref{Lem_Diag} and \ref{Lem_MartTerm},
\begin{equation*}
\big\Vert \whn - \theta_n \big\Vert\, \leq\, \cst \left\Vert \left( V_n^{\, 1/2}\, P_n^{-1}\, \frac{S_{n,\, n-1}}{n}\, (P_n^{-1})\T\, V_n^{\, 1/2} \right)^{-1} \right\Vert\, \left\Vert \frac{Z_n}{n} \right\Vert \cvgp 0.
\end{equation*}
That establishes the consistency of the OLS estimator. \qed

\subsection{Proof of Theorem \ref{Thm_Norm}}
\label{Sec_ProofNorm}

Suppose first that the eigenvalues of $A$ are real, and consider the filtration
\begin{equation*}
\forall\, n \geq 1,\, \forall\, 1 \leq k \leq n, \hsp \cF_{n,\, k} = \sigma(\Phi_{n,\, 0},\, \veps_1,\, \ldots,\, \veps_k).
\end{equation*}
For all $a \in \dR^p \backslash \{ 0 \}$, let also
\begin{equation}
\label{Mart}
m_{n,\, k}^{(a)} = a\T\, V_n^{\, 1/2}\, P_n^{-1}\, \Phi_{n,\, k-1}\, \veps_k \hsp \text{and} \hsp M_n^{(a)} = \sum_{k=1}^n m_{n,\, k}^{(a)}
\end{equation}
the way it appears in the right-hand side of \eqref{EstErr}. The sequence $(m_{n,\, k}^{(a)})$  is clearly a scalar martingale difference array w.r.t. $\cF_{n,\, k}$ at fixed $n$ and for $1 \leq k \leq n$. The predictable quadratic variation of $M_n^{(a)}$ is
\begin{eqnarray*}
\langle M^{(a)} \rangle_n & = & \sum_{k=1}^n \dE[ (m_{n,\, k}^{(a)})^2\, \vert\, \cF_{n,\, k-1} ] \\
 & = & \sigma^2\, a\T\, V_n^{\, 1/2}\, P_n^{-1} \left[ \sum_{k=1}^n \Phi_{n,\, k-1}\, \Phi_{n,\, k-1}\T \right] (P_n^{-1})\T\, V_n^{\, 1/2}\, a
\end{eqnarray*}
since $(\veps_k)$ is a white noise and $\Phi_{n,\, k-1}$ is obviously $\cF_{n,\, k-1}$-measurable. Together with \eqref{CvgSnRenorm} and the definition of $S_{n,\, n-1}$ in \eqref{Intro_OLSMulti}, that leads to the convergence
\begin{equation}
\label{CvgCroch}
\frac{\langle M^{(a)} \rangle_n}{n} \cvgp \sigma^2\, a\T H a\, >\, 0
\end{equation}
where $H$ is the covariance matrix given in \eqref{MatH}, positive definite as shown in the proof of Lemma \ref{Lem_InvLam} when $A$ has only real eigenvalues. To apply the central limit theorem for arrays of martingales, see \textit{e.g.} Thm. 1 of Sec. 8 in \citet{Pollard84}, it remains to show that the Lindeberg's condition is satisfied, in other words that
\begin{equation}
\label{Lindeberg}
\forall\, \epsilon > 0, \hsp \frac{1}{n} \sum_{k=1}^n \dE\big[ (m_{n,\, k}^{(a)})^2\, \ind_{ \{ \vert m_{n,\, k}^{(a)} \vert\, >\, \epsilon \sqrt{n} \} }\, \vert\, \cF_{n,\, k-1} \big] \cvgp 0.
\end{equation}
To prove \eqref{Lindeberg}, we can first see that, for any $1 \leq k \leq n$,
\begin{eqnarray*}
\dE\big[ (m_{n,\, k}^{(a)})^2\, \ind_{ \{ \vert m_{n,\, k}^{(a)} \vert\, >\, \epsilon \sqrt{n} \} }\, \vert\, \cF_{n,\, k-1} \big] & = & T^{(a)}_{n,\, k-1}\, \xi_{n,\, k} \\
& \leq & T^{(a)}_{n,\, k-1}\, \sup_{1\, \leq\, k\, \leq\, n} \xi_{n,\, k}
\end{eqnarray*}
where $T^{(a)}_{n,\, k-1} \eqdef a\T\, V_n^{\, 1/2}\, P_n^{-1}\, \Phi_{n,\, k-1}\, \Phi_{n,\, k-1}\T (P_n^{-1})\T\, V_n^{\, 1/2}\, a\, >\, 0$ and with, by H\"older's and Markov's inequalities,
\begin{eqnarray*}
\xi_{n,\, k} & \eqdef & \dE\big[ \veps_k^{\, 2}\, \ind_{ \{ \vert m_{n,\, k}^{(a)} \vert\, >\, \epsilon \sqrt{n} \} }\, \vert\, \cF_{n,\, k-1} \big] \\
 & \leq & \eta_{\nu}^{\, 2/(2+\nu)}\, \dP\big( (m_{n,\, k}^{(a)})^2\, >\, \epsilon^{\, 2} n\, \vert\, \cF_{n,\, k-1} \big)^{\nu/(2+\nu)} \leq~ \cst \left( \frac{T^{(a)}_{n,\, k-1}}{n} \right)^{\! \nu/(2+\nu)}
\end{eqnarray*}
in which $\eta_{\nu}$ is the moment of order $2+\nu$ of $(\veps_k)$, as defined in the statement of the theorem. But choosing $k=n$, \eqref{CvgCroch} directly entails
\begin{equation}
\label{CvgSupXi}
\xi_{n,\, n} \cvgp 0 \hsp \text{so that} \hsp \xi^{\, \sharp}_{n,\, n} \eqdef \sup_{1\, \leq\, k\, \leq\, n} \xi_{n,\, k} \cvgp 0
\end{equation}
according to Lem. 1.3.20 of \citet{Duflo97} adapted to convergence in probability. Finally,
\begin{equation*}
\frac{1}{n} \sum_{k=1}^n \dE\big[ (m_{n,\, k}^{(a)})^2\, \ind_{ \{ \vert m_{n,\, k}^{(a)} \vert\, >\, \epsilon \sqrt{n} \} }\, \vert\, \cF_{n,\, k-1} \big]\, \leq\, \left( \frac{1}{n} \sum_{k=1}^n T^{(a)}_{n,\, k-1} \right) \xi^{\, \sharp}_{n,\, n}
\end{equation*}
and, \textit{via} \eqref{CvgSnRenorm} and \eqref{CvgSupXi}, Lindeberg's condition \eqref{Lindeberg} is satisfied. Together with \eqref{CvgCroch}, this is sufficient to establish that
\begin{equation*}
\forall\, a \in \dR^p \backslash \{ 0 \}, \hsp \frac{M_n^{(a)}}{\sqrt{n}} \cvgd \cN(0,\, \sigma^2\, a\T H a).
\end{equation*}
Because $a$ is arbitrary, we can now make use of the Cramér-Wold device to get the $p$-vectorial convergence
\begin{equation}
\label{CvgMart}
\frac{M_n}{\sqrt{n}} \cvgd \cN_p(0,\, \sigma^2\, H)
\end{equation}
with
\begin{equation*}
M_n = V_n^{\, 1/2}\, P_n^{-1}\, \sum_{k=1}^n \Phi_{n,\, k-1}\, \veps_k.
\end{equation*}
Coming back to \eqref{CvgSnRenorm} and \eqref{EstErr}, and applying Slutsky's lemma, the proof is complete for the real case. If the spectrum of $A$ contains complex eigenvalues, we shall exploit the fact that the first row of $P_n^{-1}$ (say, $\ell_n\T$) still contains real entries, as it is established in Lemma \ref{Lem_RowP1}. The scalar array of $\cF_{n,\, k}$-martingale to use is now
\begin{equation*}
\forall\, n \geq 1, \hsp M_n = \sum_{k=1}^n m_{n,\, k}
\end{equation*}
with
\begin{equation*}
\forall\, 1 \leq k \leq n, \hsp m_{n,\, k} = \sqrt{1-\rho_n}\, \ell_n\T\, \Phi_{n,\, k-1}\, \veps_k.
\end{equation*}

\smallskip

\noindent Similarly, the predictable quadratic variation of $M_n$ is
\begin{equation*}
\langle M \rangle_n = \sigma^2\, (1-\rho_n)\, \ell_n\T\, \left[ \sum_{k=1}^n \Phi_{n,\, k-1}\, \Phi_{n,\, k-1}\T \right] \ell_n.
\end{equation*}
From \eqref{CvgSnRenorm},
\begin{equation}
\label{CvgMartCompl}
\frac{\langle M \rangle_n}{n} \cvgp \sigma^4\, \frac{\pi_{11}^2}{2}\, >\, 0 \hsp \text{so that} \hsp \frac{M_n}{\sqrt{n}} \cvgd \cN\left(0,\, \sigma^4\, \frac{\pi_{11}^2}{2} \right)
\end{equation}
by the same reasoning, the real and positive limit being the top-left element of $H$ (times $\sigma^2$). Since $L_n$ as defined in the statement of the theorem is the first column of $P_n$, it follows that $L_n\T (P_n\T)^{-1} = e_p\T$. Thus, by \eqref{EstErr} and noting that $v_n\, (1-\rho_n) = c$ under \ref{HypSR},
\begin{eqnarray*}
\sqrt{n\, v_n}\, \big\langle L_n,\, \whn - \theta_n \big\rangle & = & \sqrt{n\, v_n\, (1-\rho_n)}\, e_p\T \left( V_n^{\, 1/2}\, P_n^{-1}\, \frac{S_{n,\, n-1}}{n}\, (P_n^{-1})\T\, V_n^{\, 1/2} \right)^{-1} \\
& & \hsp \hsp \times \left( V_n^{\, 1/2}\, P_n^{-1} \frac{Z_n}{n}\, \right) e_p \\
& = & \sqrt{c}\, e_p\T \left[ \left( V_n^{\, 1/2}\, P_n^{-1}\, \frac{S_{n,\, n-1}}{n}\, (P_n^{-1})\T\, V_n^{\, 1/2} \right)^{-1} - H^{-1} \right] \\
& & \hsp \hsp \times \left( V_n^{\, 1/2}\, P_n^{-1} \frac{Z_n}{\sqrt{n}}\, \right) e_p \\
 & & \hsp \hsp +~ \frac{2 \sqrt{c}\, M_n}{\sigma^2\, \pi_{11}^2\, \sqrt{n}} ~ \eqdef ~ T_{1,\, n} + T_{2,\, n} \cvgd \cN\left(0,\, \frac{2 c}{\pi_{11}^2} \right)
\end{eqnarray*}
exploiting \eqref{CvgMartCompl}, the block diagonal structure of $H$ which directly gives $e_p\T H^{-1} = 2\, e_p\T/(\sigma^2 \, \pi_{11}^2)$, Slutsky's lemma and the fact that $\Vert T_{1,\, n} \Vert = o_p(\Vert T_{2,\, n} \Vert)$. The proof is now complete. \qed

\begin{rem}
\label{Rem_ProofThm}
If we replace \ref{HypUR1} by \ref{HypUR2} in Theorems \ref{Thm_Consist} and \ref{Thm_Norm}, the results still hold with slight adjustments. In particular, we have now to consider
\begin{equation*}
    V_n = \diag( 1 - \lambda_{n,\,1},\, 1 + \lambda_{n,\,2}, \, 1,\, \ldots,\, 1)
\end{equation*}
as rates, and this implies different limits in \eqref{LimWB} and \eqref{CvgSnRenorm}. The main consequence is that the top-left $1 \times 1$ block of $H_0$ is changed into a $2 \times 2$ block given by
\begin{equation*}
    \begin{pmatrix}
        \frac{\pi_{11}^2}{2} & 0 \\
        0 & \frac{\pi_{21}^2}{2}
    \end{pmatrix}.
\end{equation*}
This new submatrix is obviously real and positive definite (see Lemma \ref{Lem_ColP1} with $k=2$), and all the subsequent reasoning is perfectly similar.
\end{rem}

\section{Applications and simulations}
\label{Sec_Appli}

To illustrate the asymptotic normality of Theorem \ref{Thm_Norm}, we will use the marginal convergence stated in Corollary \ref{Cor_NormMarg}, \textit{i.e.}
\begin{equation}
\label{StatTest}
Z_n^{\, 2} \eqdef \frac{\pi_{11}^{\, 2}\, n\, v_n}{2\, c}\, \left[ \sum_{i=1}^{p} \frac{1}{(\lambda_1 \rho_n)^{\, i-1}}\, \big( \wh{\theta}_{n,\, i} - \theta_{n,\, i} \big) \right]^{\, 2} \cvgd \chi^2_1
\end{equation}
including positive and negative unit root situations ($\lambda_1 = \pm 1$). According to \ref{HypSR}, we set $\rho_n = 1-c/v_n$ with $c=1$ and $v_n = n^{\alpha}$ for some $0 < \alpha < 1$. For each simulation, $\lambda_{n,\, 2}, \ldots, \lambda_{n,\, p}$ are randomly chosen out of $[-\vert \lambda_{n,\, 1} \vert+\epsilon,\, \vert \lambda_{n,\, 1} \vert-\epsilon]$ for $\epsilon=0.1$, then the process is generated with $n=5000$ and $Z_n^{\, 2}$ is computed. The experiment is repeated 3000 times. On Figure \ref{FigHistKhi2}, the simulations are conducted with $p \in \{ 2, 3, 4 \}$ for $\alpha=1/2$, $\lambda_1 \in \{ -1, 1 \}$ and $\sigma^2 = 1$. The empirical distributions of $Z_n^{\, 2}$ are superimposed with the `true' $\chi_1^2$ density (in red), and the frequencies of observed values greater than $6.5$ are indicated. On Figure \ref{FigCurvAlphaInf}, the simulations are conducted with $p=3$, $\lambda_1 \in \{ -1, 1 \}$, $\alpha \in \{ 1/5, 1/4, 1/3, 1/2 \}$ and $\sigma^2=1$. The empirical cumulative distributions of $Z_n^{\, 2}$ are superimposed with the `true' $\chi_1^2$ cumulative distribution (in red). Figure \ref{FigCurvAlphaSup} displays the same experiments with $\alpha \in \{ 1/2, 2/3 ,3/4, 4/5 \}$. Now let us comment our observations. One can see that there are a few more outliers than expected: $\approx 2.5 \%$ of observed values greater than 6.5 whereas $\dP(\chi_1^2 > 6.5) \approx 1.1\%$. This is due to the fact that, as we have noticed, the convergence may be slow (see the perspectives in the conclusion) and the Gaussian distribution tails remain somewhat overloaded. It is particularly clear on Figure \ref{FigCurvAlphaInf} when $\alpha$ is close to 0 and, unsurprisingly, this phenomenon is widely attenuated when we choose larger values of $n$. The best results are obtained for $\alpha=1/2$ and we observe a drift which occurs faster when $\alpha$ tends to 0 (Figure \ref{FigCurvAlphaInf}) than when $\alpha$ tends to 1 (Figure \ref{FigCurvAlphaSup}). This is probably a consequence of the well-known fact that the rates of convergence are faster in an unstable model that in a stable one, as we recalled in the introduction. On the whole, we may roughly say that for the moderate values of $\alpha$ (between 1/3 and 2/3, say), the simulations are convincing and clearly fit with the theoretical behavior.

\begin{figure}[h!]
\begin{center}
\includegraphics[width=15cm]{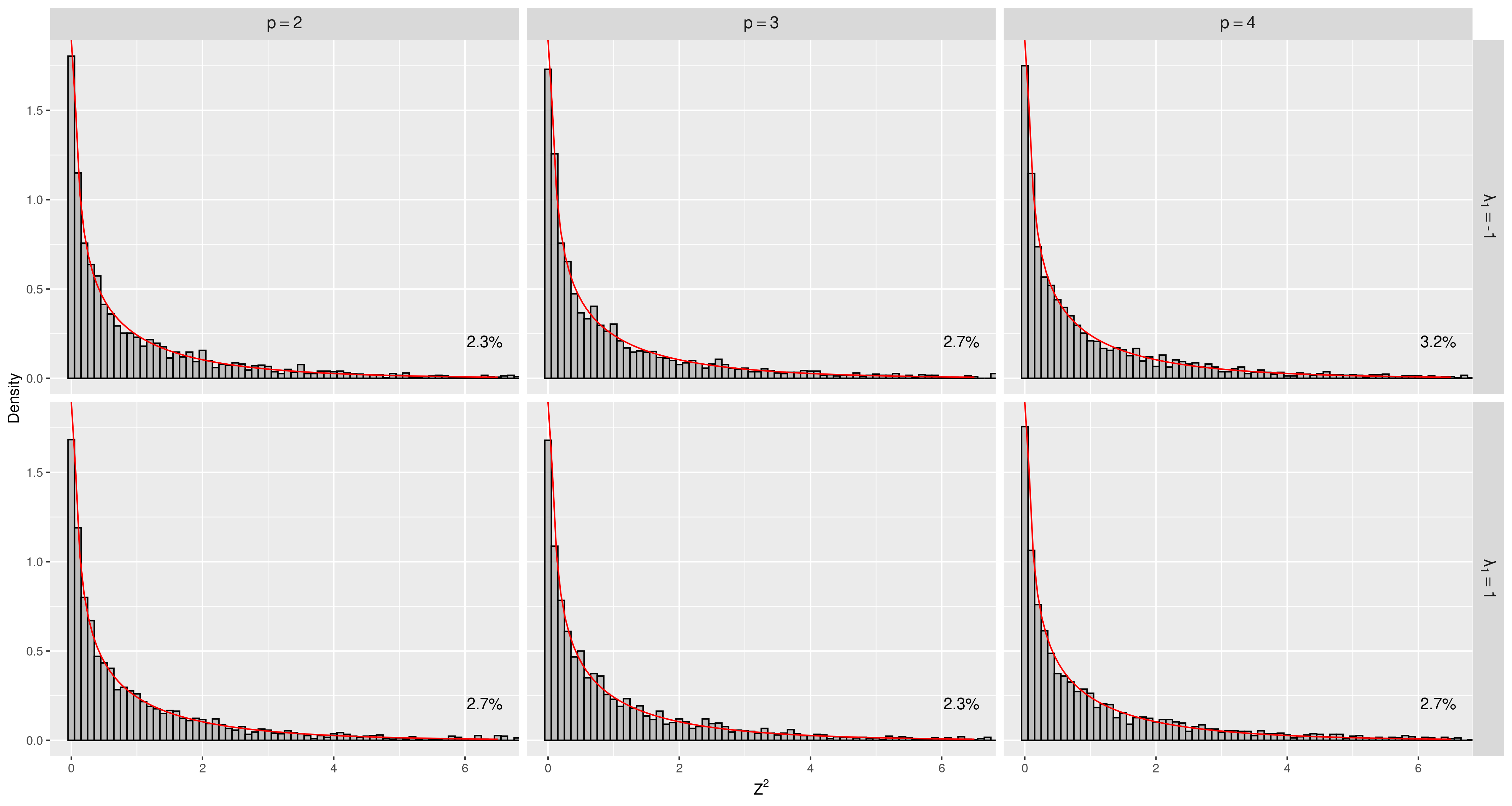}
\end{center}
\caption{Empirical distributions of $Z_n^{\, 2}$ for $p=2$ (left), $p=3$ (middle) and $p=4$ (right), with $\lambda_1 = -1$ (top) and $\lambda_1 = 1$ (bottom), on the basis of $3000$ experiments of size $n=5000$ with $\alpha=1/2$. The red curve is the $\chi_1^2$ distribution. The frequencies of observed values greater than $6.5$ are indicated, to be compared with $\dP(\chi_1^2 > 6.5) \approx 1.1\%$.}
\label{FigHistKhi2}
\end{figure}

\begin{figure}[h!]
\begin{center}
\includegraphics[width=15cm]{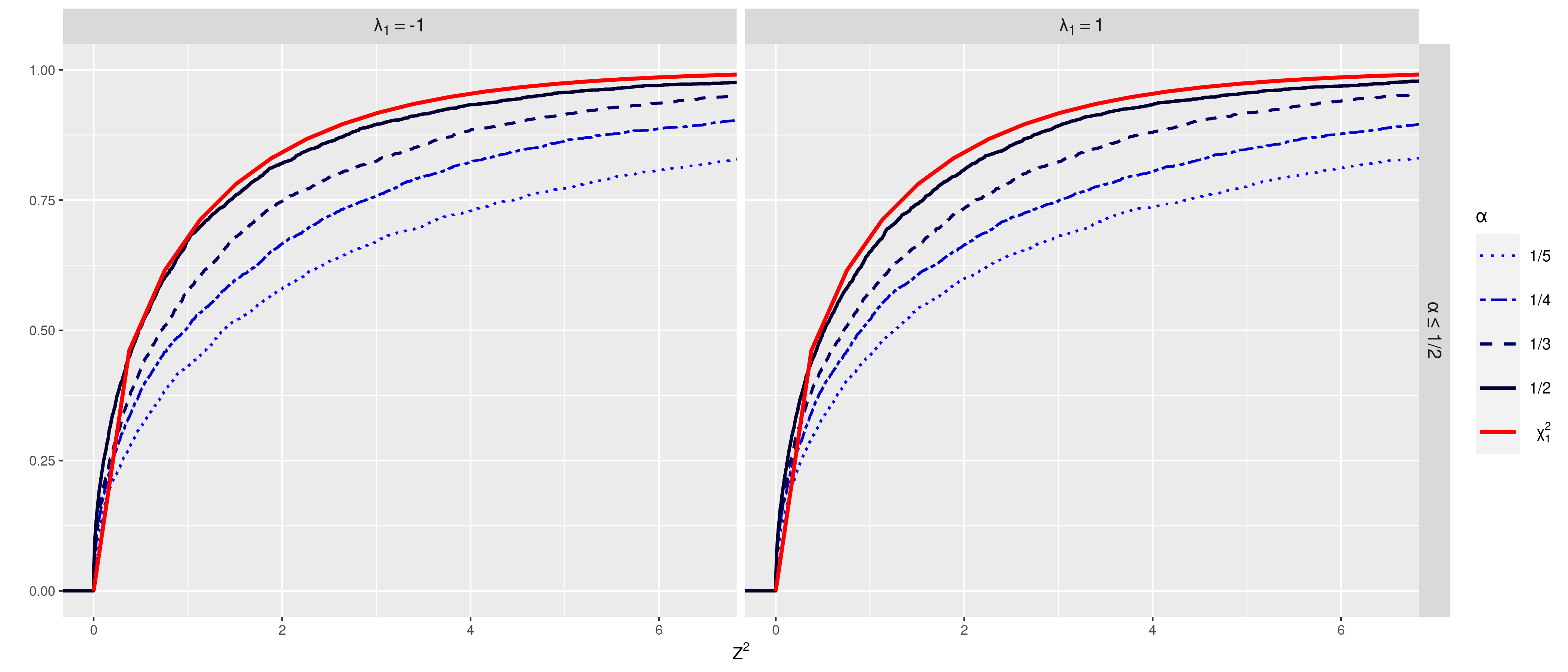}
\end{center}
\caption{Empirical cumulative distributions of $Z_n^{\, 2}$ for $p=3$ with $\lambda_1 = -1$ (left), $\lambda_1 = 1$ (right) and $\alpha \in \{1/5, 1/4, 1/3, 1/2\}$, on the basis of $3000$ experiments of size $n=5000$. The red curve is the $\chi_1^2$ cumulative distribution.}
\label{FigCurvAlphaInf}
\end{figure}

\begin{figure}[h!]
\begin{center}
\includegraphics[width=15cm]{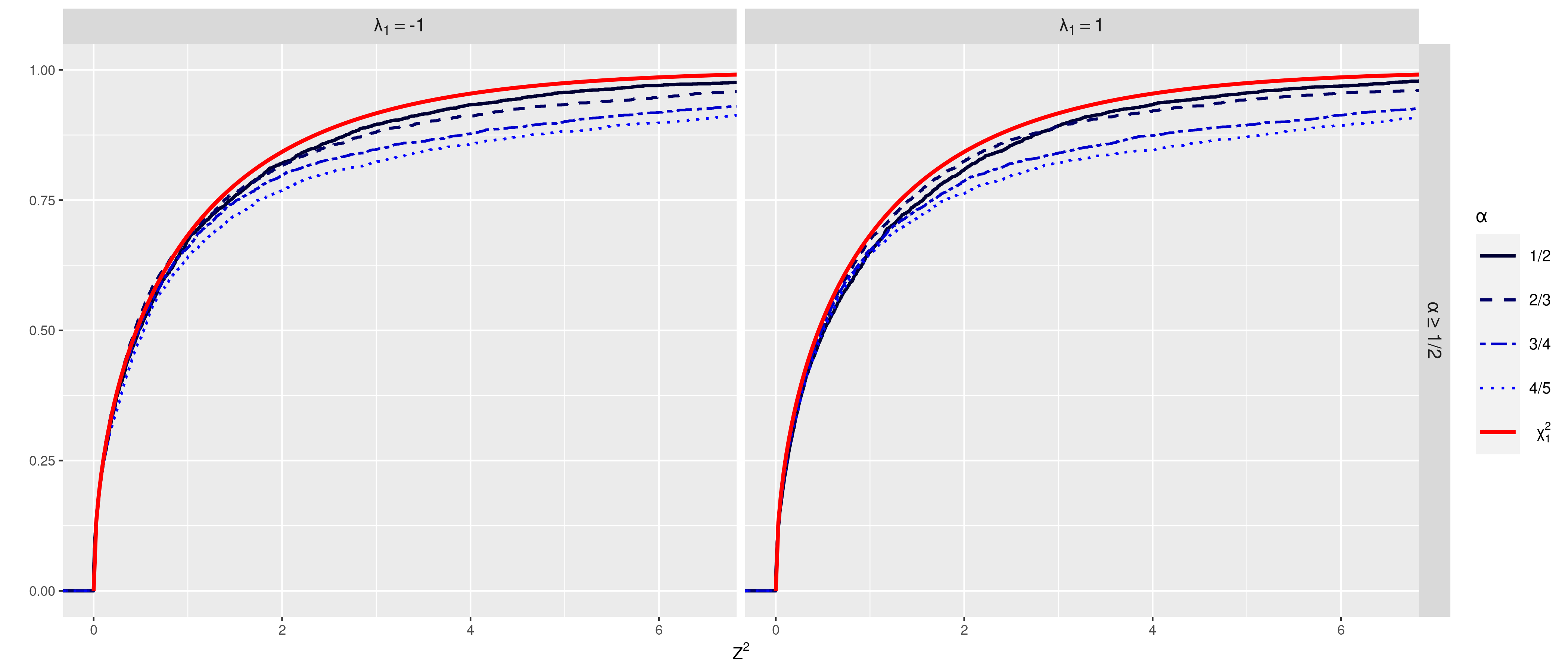}
\end{center}
\caption{Empirical cumulative distributions of $Z_n^{\, 2}$ for $p=3$ with $\lambda_1 = -1$ (left), $\lambda_1 = 1$ (right), $\alpha \in \{1/2, 2/3, 3/4, 4/5\}$, on the basis of $3000$ experiments of size $n=5000$. The red curve is the $\chi_1^2$ cumulative distribution.}
\label{FigCurvAlphaSup}
\end{figure}

\smallskip

To sum up, aligned with the work of \citet{PhillipsMagdalinos07}, this study completes the moderate deviations of \citet{Proia20} by providing a sharp analysis of the asymptotic behavior of the OLS between the rates $\sqrt{n}$ and $n$ corresponding to the stable and unstable AR$(p)$ processes, respectively, when the unit root in $A$ is either positive or negative (and even when both unit roots are present). The same conclusion prevails concerning the lack of continuity at the boundaries: $\alpha \rightarrow 0^+$ and $\alpha=0$ do not match and the same is true for $\alpha \rightarrow 1^-$ and $\alpha=1$. However, by focusing on the inner neighborhood of the unit root, nearly-unstable time-varying AR processes are instructive, especially considering the extension of the asymptotic normality when $\rho_n$ does not converge `too fast' to 1. There are still many improvements to be made, the main one being whether a unit root test can be derived. For the processes which admit a decomposition of the form
\begin{equation*}
\forall\, n \geq 1,\, \forall\, 1 \leq k \leq n, \hsp \left\{
\begin{array}{lcl}
X_{n,\, k} & = & \beta_n\, X_{n,\, k-1} + W_k \\
(W_k) & \sim & \textnormal{stable AR}(p-1),
\end{array}
\right.
\end{equation*}
an identification with \eqref{Intro_NearlyUnstableAR} is possible and shows that $(X_{n,\, k})$ is in fact a time-varying AR$(p)$ process such that if $\beta_n$ converges to $\beta$, then all its coefficients converge \textit{at the same rate}. In this subclass of processes, \citet{Park03} suggests a theoretical procedure to test for $\cH_0$ : ``$\beta = 1$" \textit{vs} $\cH_1$ : ``$\beta < 1$" provided that $\vert \beta_n - \beta \vert \asymp 1/v_n$. The authors are pretty convinced that this could be extended to all the nearly unstable processes covered by \eqref{Intro_NearlyUnstableAR} and that such a trail is likely to outperform the usual Dickey-Fuller tests for unit root. This is a work in progress. In addition, the simulations highlight an asymmetry in the convergence rates: we observed, on many more experiments than those presented here, that the asymptotic distribution deteriorates less quickly when $\alpha$ moves away from 1/2 towards 1 than towards 0 (see again Figures \ref{FigCurvAlphaInf} and \ref{FigCurvAlphaSup}). It should therefore be interesting to investigate more deeply the convergence rate of the OLS. Finally, it would be instructive but very challenging to look at the behavior of the OLS on the \textit{outer} neighborhood of the unit root, that is, for explosive but nearly unstable AR$(p)$ processes, as it is already done for $p=1$.

\smallskip

\noindent \textbf{Acknowledgements}. This research benefited from the support of the ANR project `Efficient inference for large and high-frequency data' (ANR-21-CE40-0021). The authors sincerely thank the anonymous reviewer and the associate editor for their comments and references which have clearly contributed to the improvement of the paper.

\nocite{*}
\bibliographystyle{plainnat}
\bibliography{NearlyUnstable_V2}

\end{document}